\documentclass[11pt,reqno]{amsart}

\allowdisplaybreaks
\usepackage[title]{appendix}
\usepackage{times}
\usepackage{amsmath,amsfonts, amstext,amssymb,amsbsy,amsopn,amsthm}
\usepackage[initials,alphabetic]{amsrefs}
\usepackage{mathrsfs}
\usepackage{bm}
\usepackage{dsfont}
\usepackage{esint}
\usepackage{graphicx}   % for figures
\usepackage{hyperref}
\usepackage[all]{xy}
\usepackage{relsize}
\usepackage{mathtools}
\usepackage{array}
\usepackage{verbatimbox}
\usepackage{xcolor}
\usepackage{dsfont}
\usepackage{graphicx}   % for figures
\usepackage{hyperref}
\usepackage{tikz}
\usepackage{enumitem}

\newcommand{\BR}{\mathbb{R}}
\newcommand{\BC}{\mathbb{C}}
\newcommand{\BZ}{\mathbb{Z}}
\newcommand{\SU}{\operatorname{SU}}

\newcommand{\Sp}{\operatorname{Sp}}

\newcommand{\seq}{\subseteq}
\newcommand{\wt}{\widetilde}
\newcommand{\non}{\nonumber}
\newcommand{\ad}{\operatorname{ad}}
\newcommand{\Ad}{\operatorname{Ad}}
\newcommand{\p}{\partial}

\newcommand{\mcl}{\mathcal}
\newcommand{\Span}{\operatorname{span}}

\let\emptyset\varnothing

\newtheorem{theorem}{Theorem}[section]

\newtheorem{proposition}[theorem]{Proposition}
\newtheorem{lemma}[theorem]{Lemma}

\theoremstyle{definition}
\newtheorem{definition}[theorem]{Definition}
\theoremstyle{remark}

\theoremstyle{remark}

\theoremstyle{remark}

\theoremstyle{remark}

\theoremstyle{remark}

\theoremstyle{remark}

\begin{document}

\title{Partial Regularity of Stable Stationary Harmonic Maps into Compact Lie Groups}
\date{\today}
\author{Jacob Krantz}
\address{Department of Mathematics, Princeton University, Princeton, NJ 08540, USA}
\email{jk9945@princeton.edu}

\maketitle
\begin{abstract}
Let $M$ be a compact Riemannian manifold, and let $G$ be a compact Lie group with bi-invariant metric. We show that the singular set of any stable stationary harmonic map $u : M \to G$ has Hausdorff codimension at least four. This is sharp.
\end{abstract}
\tableofcontents

\section{Introduction}
The regularity of harmonic maps between compact manifolds has been studied very intensively. In dimension at most two, H\'elein \cite{Hel91} showed that weakly harmonic maps are smooth. However, once the dimension of the domain manifold exceeds two, weakly harmonic maps can be singular. Therefore, instead of attempting to derive a complete regularity theory, one attempts to prove partial regularity. 

It turns out that in higher dimensions even partial regularity cannot be expected for weakly harmonic maps. For example, as shown by Rivi\`ere \cite{Riv95}, there are weakly harmonic maps that are everywhere discontinuous. In order to achieve partial regularity, we would then assume that a map is more than weakly harmonic. In particular, we might assume the map is minimizing. In that case, Schoen and Uhlenbeck \cite{SU82} were able to develop a satisfying partial regularity theory. They showed that minimizing harmonic maps between compact manifolds are smooth away from a Hausdorff codimension three set. They also developed a dimension reduction procedure to show that minimizing harmonic maps into special targets are smooth away from even higher codimension sets. In \cite{SU84} they applied this to the case of minimizing harmonic maps into spheres. In particular, they showed that minimizing harmonic maps into $S^{3}$ are smooth away from a Hausdorff codimension four set. Note that $S^{3} = \SU(2).$ This is the starting point of our work. After Schoen and Uhlenbeck's work, several papers addressed similar questions for homogeneous targets. See for example Xin \cite{Xin89} and Okayasu \cite{Oka94}.

In certain cases, a similar dimension reduction argument can be applied in the context of stable stationary harmonic maps. Hong and Wang \cite{HW99} showed that in certain cases stable stationary harmonic maps satisfy a compactness property. Lin and Wang \cite{LW06} used this to study the partial regularity of stable stationary maps into spheres. Recently, Li \cite{Li26} was able to prove the optimal partial regularity result for stable stationary maps into spheres. 

Hsu \cite{Hsu05} showed that stable stationary harmonic maps into targets that admit no nonconstant stable harmonic map from $S^{2}$ satisfy an analogous compactness theorem. In particular, Hsu's theorem will apply in our paper. The form of the theorem that we will use says that as long as every nonconstant harmonic map from $S^{2}$ into our target manifold fails the cone stability inequality (\ref{conestabineq}), we know that every stable stationary harmonic map into the target manifold is smooth away from a Hausdorff codimension four set. This result was used in \cite{Kra26} to show that stable stationary harmonic maps into compact simple Lie groups that are not $\Sp(n)$ for $n \geq 8$, $\operatorname{E}_{8}$, $\operatorname{F}_{4}$, or $\operatorname{G}_{2}$ are smooth away from a Hausdorff codimension four set. Here we generalize this result. We now state our main theorem as follows:

\begin{theorem}\label{mainthm}
Let $u : M \to (G,g)$ be a stable stationary harmonic map from a compact manifold into a compact Lie group with bi-invariant metric. Then $u$ is smooth away from a set of Hausdorff codimension at least four.
\end{theorem}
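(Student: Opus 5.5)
By Hsu's compactness theorem as recalled in the introduction, it suffices to show that every nonconstant harmonic map $\phi : S^{2} \to (G,g)$ fails the cone stability inequality (\ref{conestabineq}). That inequality expresses stability of the homogeneous extension $x \mapsto \phi(x/|x|)$ on $\BR^{3}$. Once we know no such $\phi$ exists, dimension reduction rules out singular tangent maps that are homogeneous of degree zero and depend on three variables. This leaves codimension at least four.

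\textbf{Reduction to simple factors.} I would first reduce to the case of a compact simple Lie group. Up to a finite cover, $G = T^{k} \times G_{1} \times \cdots \times G_{m}$ with the $G_{i}$ simple, and the bi-invariant metric splits as a product of flat and scaled Killing metrics. Harmonic maps into a product are tuples of harmonic maps. The second variation also splits, so it suffices to find one destabilizing variation in one factor. A harmonic map $S^{2} \to T^{k}$ lifts to $\BR^{k}$ and is therefore constant. A finite cover does not change harmonicity of maps from the simply connected $S^{2}$. So some component $\phi_{i} : S^{2} \to G_{i}$ is nonconstant.

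\textbf{Destabilizing the cone over $\phi$ in a simple factor.} The cone stability inequality tests variations $\eta(r) V(\theta)$ along the cone. After separating variables, it reduces to a spectral condition on $S^{2}$: stability of the cone holds only if the Jacobi operator $J_{\phi}$ satisfies
\[
\int_{S^{2}} |\nabla V|^{2} - \langle R(V,d\phi)d\phi, V\rangle \;\geq\; -\tfrac{1}{4}\int_{S^{2}} |V|^{2}
\]
for all $V$, with the constant coming from the Hardy inequality in dimension three. I would use the natural test fields $V = \phi\cdot X$ and $V = X\cdot\phi$ for $X$ in the Lie algebra $\mathfrak g$, obtained by left and right translating by one-parameter subgroups. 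These are Killing fields, hence Jacobi fields with zero second variation. I would then multiply them by the first eigenfunctions $x_{j}$ of $S^{2}$, or by conformal factors, and sum over an orthonormal basis of $\mathfrak g$ and over $j=1,2,3$. The trace over $\mathfrak g$ should turn the curvature term $|[\cdot,\cdot]|^{2}$ into a multiple of $|d\phi|^{2}$ via the Casimir (Killing-form) identity. The sum of second variations should then become a negative multiple of the energy, beating the $-\tfrac14\|V\|^{2}$ threshold. This is the computation I expect to be the main obstacle. The earlier paper \cite{Kra26} apparently failed for $\Sp(n)$ with $n \geq 8$, $\operatorname{E}_{8}$, $\operatorname{F}_{4}$ and $\operatorname{G}_{2}$, so a crude averaging presumably loses a representation-dependent constant.

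To fix this, I would first try averaging over the adjoint representation itself rather than a matrix embedding, so that the constant is the universal Killing-form normalization. If that still fails, I would use the classification of harmonic spheres via Uhlenbeck's unitons and factorization. The aim is to reduce $\phi$ to a map into a totally geodesic $S^{3}=\SU(2)$ or a symmetric subspace, where instability can be checked directly as in Schoen–Uhlenbeck for $S^{3}$.

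Sharpness would follow separately, for instance from the map $x/|x| : \BR^{4} \to S^{3} \cong \SU(2)$, but the statement only requires the bound.
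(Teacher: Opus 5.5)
You have a genuine gap. Your reduction to a nonconstant component in a compact simple factor is fine; for disconnected $G$, first left-translate $\phi$ into the identity component. Your reformulation of cone stability as $I(V,V)\ge-\tfrac14\|V\|^2$ is exactly (\ref{conestabineq}).

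\textbf{The proposed destabilizing variation cannot work.} Finding this variation is the whole content of the theorem. The fields $\phi X$ and $X\phi$ are Killing fields, hence Jacobi fields along any harmonic map. For a Jacobi field $K$ and any function $f$ on $S^2$,
\[
I(fK,fK)=\int_{S^2}|\nabla f|^2|K|^2+\int_{S^2}f^2\langle J_\phi K,K\rangle=\int_{S^2}|\nabla f|^2|K|^2 .
\]
Thus $\tfrac14\|fK\|^2+I(fK,fK)=\int_{S^2}(\tfrac14f^2+|\nabla f|^2)|K|^2>0$ for every choice of $f$, whether eigenfunctions $x_j$ or conformal factors. Any sum of such terms is therefore positive. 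The curvature term has already been cancelled by the Jacobi equation, so no Casimir identity can make the total negative.

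Averaging arguments that do give negativity, as for $S^n$, use non-Killing fields such as tangential projections of constant vectors under an isometric embedding. There the sign depends on a group- and representation-dependent constant. That is the obstacle you flag, and the proposal does not resolve it. The uniton fallback is not a proof either. The Uhlenbeck/Burstall--Rawnsley factorization writes $\phi$ as a product of maps into inner symmetric spaces, not as a map into a single totally geodesic $S^3$ or symmetric subspace. So there is nothing to which a Schoen--Uhlenbeck computation applies.

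\textbf{The missing idea.} You need a variation adapted to the holomorphic structure of $\phi^*TG^{\BC}$ induced by $D=d+\tfrac12\ad\alpha$.
\begin{enumerate}
\item By Grothendieck's theorem this bundle splits into root line bundles.
\item Since $\Phi=A\,dz$ is a nonzero holomorphic section of $\ad P\otimes K$, the highest-root bundle $L$ has $c_1(L)\ge 2$.
\item The paper takes $\sigma=\sqrt{-1}H(x)$, the coroot of the highest root transported along $L$ by compact gauges, so $|\sigma|^2=2$.
\item In compact gauge, the $\mathfrak g_{-2}$-component $b_{-2}$ of the connection vanishes, by a degree argument on $Q\otimes K$.
\item Chern--Weil for $L$ gives $\pi c_1(L)=\int|A_1|^2+2|A_2|^2-|b_{-1}|^2$.
\end{enumerate}
Hence
\[
\tfrac14\|\sigma\|^2+I(\sigma,\sigma)=2\pi\bigl(1-2c_1(L)\bigr)-8\int_{S^2}|A_2|^2\,dxdy\le-6\pi .
\]
This bound holds uniformly over all compact simple $G$, with no representation-dependent constants.
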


This is sharp. See \cite{Kra26} for examples of codimension four singularities of stable stationary harmonic maps into compact simple Lie groups. 

Before discussing the proof of Theorem \ref{mainthm}, we emphasize that the proof requires an understanding of harmonic maps $S^{2} \to G.$ The study of such maps was greatly influenced by Uhlenbeck \cite{Uhl89}. Soon after Uhlenbeck's paper, Burstall and Rawnsley \cite{BR90} were able to produce a factorization theorem that turns out to be very helpful in proving Theorem \ref{mainthm}.

This paper will proceed by first recalling the basics and defining cone stability. In this first part of the argument we assume we are working with a compact simple Lie group target with bi-invariant metric. We then proceed by contradiction. We fix a nonconstant cone stable harmonic map $\phi : S^{2} \to G$. This tells us that $\phi$ satisfies the cone stability inequality (\ref{conestabineq}). We attempt to find a variation that fails (\ref{conestabineq}).

To do this, we then review Grothendieck's theorem \cite{Gro57} about holomorphic principal $G^{\BC}$-bundles on $S^{2}.$ This theorem is crucial because it is what allows us to holomorphically decompose $\phi^{*}TG^{\BC}$ into root bundles determined by the roots of $\mathfrak{g}^{\BC}$ with respect to a fixed maximal toral subalgebra $\mathfrak{t} \seq \mathfrak{g}$. Looking carefully at these root bundles, we choose the variation determined by the coroot dual to the highest root. This specific choice of variation will allow us to show that certain potentially positive terms of (\ref{conestabineq}) automatically vanish for our choice of variation. What we are left with are negative terms with a topological upper bound depending only on the first Chern class of the line bundle associated to the highest root. From here we immediately get a contradiction. By Hsu's theorem, this completes the proof in the case of compact simple targets. The general case then follows quickly.

Note that our construction is very similar to the factorization theorem of \cite{BR90}. In particular, the use of Grothendieck's theorem and construction of the variational field above are very similar to the proof of their factorization theorem. The main difference between their setting and ours is that we do not need our variation to induce a harmonic map as in their factorization theorem. Indeed, we do not have to restrict ourselves to variations coming from parabolic subalgebras corresponding to simple roots with coefficient one in the highest root. This observation will be the main motivation for our proof below.

\textbf{AI Acknowledgement}: This argument was assisted by AI. In particular, when attempting to remove the groups remaining in the original paper \cite{Kra26} done without AI, I was unable to show that a certain potentially positive contribution to the second variation vanishes. After discussing this with the AI, it was able to show that this is indeed true. It is proved in Section \ref{b-2vanishessection} below. AI was also used to improve the exposition, although I wrote this myself. Finally, the AI also noticed that this argument can be extended to disconnected compact Lie groups.

\section{Harmonic Map Basics}
For this section, let $(M^{m},g')$ and $(N,g)$ be compact manifolds. We will be studying maps $u : M \to N$ that are critical points of the Dirichlet energy. The Dirichlet energy is defined as:
\begin{align}
E[u] = \dfrac{1}{2}\int_{M}|\nabla u|^{2}
\end{align}
This energy makes sense for smooth maps between manifolds, but care is needed to define the energy for weakly differentiable maps. The standard way to handle this problem is to fix an isometric embedding $\iota : N \to \BR^{k}.$ Then one defines
\begin{align}
W^{1,2}(M,N) = \{u \in W^{1,2}(M,\BR^{k}): u(x) \in N \text{~for a.e.~} x \in M\}.
\end{align}
Next, we define the singular set of a map $u \in W^{1,2}(M,N)$ to be the complement of the regular set of $u$. Here, the regular set of $u$ is defined as:
\begin{align}
\operatorname{Reg}(u) = \{x \in M : u \text{~is smooth in some ball around~} x\}.
\end{align}
We now define what it means for a $W^{1,2}(M,N)$ map to be weakly harmonic.
\begin{definition}[Weakly Harmonic Map] The map $u \in W^{1,2}(M,N)$ is said to be weakly harmonic if it is a critical point of the Dirichlet energy with respect to target variations. This can be expressed as the weak equality:
\begin{align}
\Delta u = A_{u}(\nabla u, \nabla u)
\end{align}
Here $A$ is the second fundamental form of our isometric embedding.
\end{definition}
This paper will also assume harmonic maps are stationary because weakly harmonic maps that are not stationary cannot be expected to exhibit partial regularity. Note however that if our map is smooth, weak and stationary harmonicity necessarily coincide.
\begin{definition}[Stationary Harmonic Map]
We say that a weakly harmonic map $u$ is stationary if it is also critical with respect to domain variations. This condition produces the following weak identity:
\begin{align}
\operatorname{div}\left (\dfrac{1}{2}|\nabla u|^{2}g' - u^{*}g\right ) = 0.
\end{align}
\end{definition}
While stationary harmonic maps are smooth away from a set of $(m-2)$-dimensional Hausdorff measure zero \cite{Bet93}, it is impossible to reduce the dimension of the singular set any further in general \cite{JLY26}. Therefore, one might work with stable stationary maps.
\begin{definition}[Stable Harmonic Map]
We say that a weakly harmonic map $u$ is stable if 
\begin{align}
\int_{M}|\nabla^{u}X|^{2} - g(R^{N}(X,\nabla_{i}u)\nabla_{i}u,X) \geq 0
\end{align}
for every compactly supported $X \in \Gamma(M,u^{*}TN).$ Here $\nabla^{u}$ is the pullback of the Levi-Civita connection on $TN$ by $u.$
\end{definition}
We call a map $u \in W^{1,2}(M,N)$ stable stationary if it is stable and stationary. We will study stable stationary maps in this paper.
\section{The Cone Stability Inequality and Dimension Reduction}
Here we state the important analytic preliminaries. Note that we will always assume $S^{2}$ is equipped with the round metric of radius one. When equipped with its usual complex structure, $S^{2}$ admits a local holomorphic coordinate $z$ for which the metric is conformal to the Euclidean metric. We call the corresponding real coordinates $(x,y).$ These coordinates will at times be useful. We now move on to an important definition.

\begin{definition}[Cone Stable Map]
Let $\phi : S^{2} \to (N,g)$ be a harmonic map. We call the homogeneous extension of $\phi$, $\wt\phi : \BR^{3} \to N$, the cone map associated to $\phi.$ We say that $\phi$ is cone stable if $\wt\phi$ is a stable harmonic map. Note that $\phi$ is weakly harmonic if and only if $\wt\phi$ is weakly harmonic.
\end{definition}

We will next discuss dimension reduction for stable stationary harmonic maps. The particular result that we will use is due to Hsu \cite{Hsu05} and can be written as follows:

\begin{theorem}[\cite{Hsu05}]
Let $u : M \to N$ be a stable stationary harmonic map. Suppose there are no nonconstant stable harmonic maps $S^{2} \to N.$ Suppose furthermore that there are no nonconstant cone stable harmonic maps $S^{2} \to N.$ Then $u$ is smooth away from a Hausdorff codimension four set.
\end{theorem}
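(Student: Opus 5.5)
The plan is the Schoen--Uhlenbeck/Federer dimension reduction scheme, adapted to the stable stationary setting. It rests on two ingredients: a compactness theorem, which is where the hypothesis on stable harmonic spheres enters, and a classification of the homogeneous tangent maps of low effective dimension, which is where the hypothesis on cone stable maps enters.

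\emph{Step 1 (compactness).} Let $u_k$ be stable stationary harmonic maps with uniformly bounded energy on a ball, and let $u_k \rightharpoonup u$ weakly in $W^{1,2}$. Write $|\nabla u_k|^2\,dx \rightharpoonup |\nabla u|^2\,dx + \nu$. By the monotonicity formula and Bethuel's $\varepsilon$-regularity, the defect measure $\nu$ is supported on a set $\Sigma$ with $\mathcal H^{m-2}(\Sigma)<\infty$. At $\mathcal H^{m-2}$-a.e.\ point of $\Sigma$, I would blow up, in the manner of Lin's analysis of defect measures, and extract a nonconstant harmonic sphere $S^2\to N$ as a bubble.
\begin{itemize}
\item The key point is that the second variation inequality survives this rescaling. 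Test vector fields supported in the bubble region, lifted to the approximating maps $u_k$, have energy-controlled error terms, so the bubble is a \emph{stable} harmonic map $S^2\to N$.
\item Since no nonconstant stable harmonic spheres exist, $\nu=0$ and the convergence is strong in $W^{1,2}$.
\item Strong convergence passes to the limit in both the stationarity identity and the stability inequality, because both are continuous in $\nabla u$ in $L^2$. Hence $u$ is again stable stationary.
\end{itemize}
I expect this step to be the main obstacle. Showing that bubbles inherit stability, and ruling out energy lost in necks between bubbles, requires a careful cutoff argument. I would first try logarithmic cutoffs in the neck regions, which make the error terms small.

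\emph{Step 2 (tangent maps).} By monotonicity and Step 1, every blow-up $u_{x,r}(y)=u(x+ry)$ converges strongly, along a subsequence, to a tangent map $\varphi$. This $\varphi$ is homogeneous of degree zero and stable stationary. Strong convergence, combined with $\varepsilon$-regularity, gives upper semicontinuity of singular sets under these limits, which is exactly what Federer's dimension reduction argument needs.

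\emph{Step 3 (reduction).} Suppose $\dim_{\mathcal H}\operatorname{Sing}(u) > m-4$. Iterated blow-ups, via Federer's argument, produce a tangent map $\varphi$ invariant under translations in a subspace of dimension $d\ge m-3$, with singular set equal to that subspace. Either $d=m-2$ or $d=m-3$.
\begin{itemize}
\item If $d=m-2$, then $\varphi$ is a homogeneous map $\BR^2\to N$. Its energy on the unit ball is $\int_0^1 r^{-1}dr\int_{S^1}|\p_\theta\varphi|^2$, which is finite only if $\varphi$ is constant. This contradicts the fact that the $(m-2)$-plane is singular, and so yields codimension three.
\item If $d=m-3$, write $\varphi(x',z)=\wt\phi(x')$ with $x'\in\BR^3$ and $z\in\BR^{m-3}$, where $\phi=\varphi|_{S^2}$. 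Stationarity of $\varphi$ makes $\phi$ weakly harmonic, and by H\'elein \cite{Hel91} $\phi$ is smooth. It is nonconstant, since otherwise the origin would be regular.
\end{itemize}
In the case $d=m-3$, take a test field $X(x')$ for $\wt\phi$ and multiply it by a cutoff $\eta_R(z)$ supported on $|z|\le 2R$. Inserting this into the stability inequality for $\varphi$, dividing by $R^{m-3}$, and letting $R\to\infty$ gives the stability inequality for $\wt\phi$. Hence $\phi$ is a nonconstant cone stable harmonic map, contradicting the hypothesis. Therefore $\dim_{\mathcal H}\operatorname{Sing}(u)\le m-4$.
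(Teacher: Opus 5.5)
Your outline follows the route the paper relies on when it cites Hsu: the absence of nonconstant stable harmonic spheres gives strong compactness of stable stationary maps, and Federer dimension reduction then leaves only codimension-three homogeneous tangent maps, which your $\eta_R(z)$ averaging correctly shows would be cone stable. The one caution is Step 1, which is exactly the cited compactness theorem: neck energy is beside the point there, since when $\nu\neq0$ you need only one nonconstant stable bubble, not an energy identity. The real work is extending bubble test fields along the $(m-2)$-dimensional concentration set, where the bubble's center and scale vary from slice to slice. This is done by blowing up so that stationarity forces the tangential derivatives to vanish and the limit becomes translation invariant along that set, and then averaging with the same kind of cutoff as in your Step 3; logarithmic cutoffs are needed only to remove finitely many further concentration points in the two-dimensional limit.
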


It is well-known that there are no nonconstant stable harmonic maps $S^{2} \to G$ (see \cite{BR90}, Theorem $7.5$). Therefore, a nonexistence theorem for nonconstant cone stable maps from $S^{2}$ into $G$ automatically yields our main theorem. We know a lot about harmonic maps from $S^{2}$, but cone stability is a condition on an associated map from $\BR^{3}.$ Therefore, we would like to write cone stability as a condition on the underlying map from $S^{2}.$ It turns out that if $\phi$ is cone stable, it must satisfy a ``cone stability inequality:"

\begin{proposition}[Schoen-Uhlenbeck]
Let $\phi : S^{2} \to (N,g)$ be a cone stable harmonic map. Then, 
\begin{align}
\dfrac{1}{4}\int_{S^{2}}|X|^{2} + \int_{S^{2}}|\nabla^{\phi}X|^{2} - g(R^{N}(X,\nabla_{i}\phi)\nabla_{i}\phi,X) \geq 0 \label{conestabineq}
\end{align}
for every $X \in \Gamma(S^{2},\phi^{*}TN).$ Here $\nabla^{\phi}$ is the pullback of the Levi-Civita connection on $N$ by $\phi.$
\end{proposition}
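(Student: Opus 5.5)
We will test the stability of the cone map $\wt\phi$ against a variation $V(r,\theta) = f(r)X(\theta)$, where $X \in \Gamma(S^{2},\phi^{*}TN)$ is arbitrary and $f$ is a compactly supported radial cutoff on $(0,\infty)$. We then optimize over $f$. Write $\BR^{3}\setminus\{0\}$ in polar coordinates, so that $dx = r^{2}\,dr\,d\sigma$. Since $\wt\phi(r\theta) = \phi(\theta)$, we have $\p_{r}\wt\phi = 0$, and the angular derivatives of $\wt\phi$ have size $r^{-1}$ times those of $\phi$.

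\textbf{Step 1: compute the second variation.} We get
\begin{align*}
|\nabla^{\wt\phi} V|^{2} = f'(r)^{2}|X|^{2} + \frac{f(r)^{2}}{r^{2}}|\nabla^{\phi}X|^{2}_{S^{2}},
\end{align*}
because the covariant derivative of $X$ in the radial direction vanishes ($X$ is independent of $r$ and $\p_r\wt\phi=0$). Similarly,
\begin{align*}
g\bigl(R^{N}(V,\nabla_{i}\wt\phi)\nabla_{i}\wt\phi,V\bigr) = \frac{f^{2}}{r^{2}}\, g\bigl(R^{N}(X,\nabla_{i}\phi)\nabla_{i}\phi,X\bigr),
\end{align*}
where on the right the sum runs over an orthonormal frame of $S^{2}$. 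Integrating against $r^{2}\,dr\,d\sigma$, stability of $\wt\phi$ gives
\begin{align*}
\Bigl(\int_{0}^{\infty} f'^{2} r^{2}\,dr\Bigr)\int_{S^{2}}|X|^{2} + \Bigl(\int_{0}^{\infty} f^{2}\,dr\Bigr) Q(X) \geq 0,
\end{align*}
where $Q(X)=\int_{S^{2}}|\nabla^{\phi}X|^{2} - g(R^{N}(X,\nabla_i\phi)\nabla_i\phi,X)$. The variation $V$ is Lipschitz and compactly supported away from the origin, so it is an admissible test field.

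\textbf{Step 2: apply the sharp Hardy inequality.} On $(0,\infty)$ the sharp one-dimensional Hardy inequality reads $\int_{0}^{\infty} f'^{2}r^{2}\,dr \ge \frac14\int_{0}^{\infty} f^{2}\,dr$, and the constant $\frac14$ is optimal. If $Q(X) \ge 0$ there is nothing to prove. Otherwise we choose $f$ so that the ratio $\int f'^{2}r^{2}/\int f^{2}$ is arbitrarily close to $\frac14$, divide by $\int f^{2}$, and pass to the limit. This yields $\frac14\int|X|^{2} + Q(X) \ge 0$, which is (\ref{conestabineq}).

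\textbf{Step 3: construct the near-optimizers.} The extremal profile is $f = r^{-1/2}$, which is only borderline non-integrable at both ends. We truncate it logarithmically: for $\epsilon \le r \le 1/\epsilon$ set $f(r) = r^{-1/2}\eta(\log r)$, where $\eta$ is a fixed cutoff rescaled to the interval $[\log\epsilon, -\log\epsilon]$. Substituting $s = \log r$ gives
\begin{align*}
\int_{0}^{\infty} f^{2}\,dr = \int \eta^{2}\,ds, \qquad \int_{0}^{\infty} f'^{2}r^{2}\,dr = \int \bigl(\eta' - \tfrac12\eta\bigr)^{2}\,ds .
\end{align*}
The cross term $-\int \eta\eta'\,ds$ vanishes, and $\int \eta'^{2}\,ds = O(1/|\log\epsilon|)$ while $\int\eta^{2}\,ds \sim |\log\epsilon|$. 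Hence the ratio tends to $\frac14$ as $\epsilon \to 0$.

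\textbf{Main obstacle.} The only delicate points are this optimal-constant construction and checking that the radial part of the connection term contributes nothing. The latter holds because $\p_{r}\wt\phi = 0$ and $X$ does not depend on $r$, so $\nabla^{\wt\phi}_{\p_r}V = f'X$. Everything else is routine bookkeeping of the $r$-weights.
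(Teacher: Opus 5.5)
Your proposal is correct. The paper gives no proof of its own: it attributes the idea to \cite{SU84} and defers the derivation to \cite{Kra26}. Your argument is the standard Schoen--Uhlenbeck derivation being cited: you take $V=f(r)X$, reduce to the ratio of $\int_0^\infty f'^2r^2\,dr$ to $\int_0^\infty f^2\,dr$, and drive that ratio to $\frac14$ with the logarithmically truncated profile $r^{-1/2}\eta(\log r)$. One small remark: neither the Hardy inequality nor the case split on the sign of $Q(X)$ is logically needed, since the near-optimizers of Step 3 alone give the result; Hardy only explains why $\frac14$ is the best constant this method can produce.
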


This idea appears to be due to Schoen and Uhlenbeck \cite{SU84}, but the derivation of this general form of the inequality is done in \cite{Kra26}.

Our goal now is to show that for every nonconstant harmonic map $\phi : S^{2} \to G$ we can find a variation $X$ that fails (\ref{conestabineq}). 

\section{Lie Algebra Preliminaries}\label{liealgebrasection}
In this section, we will review some information about Lie algebras. For more information, see Fulton and Harris \cite{FH91} or Burstall and Rawnsley \cite{BR90}.

We will start by assuming $G$ is a compact simple Lie group and will return to the general case at the end of the paper. We will always assume $G$ has the bi-invariant metric given by negative one times the normalization of the Killing form that measures the length squared of long roots as two. We call this $-\kappa.$ Note that this is just to make computations easier and does not affect the result at all. Now, the Lie algebra of $G$ will be denoted $\mathfrak{g}$. The complexification of $G$ will be denoted $G^{\BC}$, and the complexification of $\mathfrak{g}$ will be denoted $\mathfrak{g}^{\BC}.$ $\tau$ will denote complex conjugation (the Cartan involution) in $\mathfrak{g}^{\BC}$ with respect to our fixed compact real form $\mathfrak{g}$, and we will use $\dagger$ to denote $-\tau.$ In particular, $\dagger$ acts as $-\operatorname{Id}$ on $\mathfrak{g}.$ This allows us to define a Hermitian inner product on $\mathfrak{g}^{\BC}$ by $h(X,Y) = \kappa(X,Y^{\dagger}).$ Finally, let $\mathfrak{t} \seq \mathfrak{g}$ be a maximal toral subalgebra. Its complexification $\mathfrak{t}^{\BC} \seq \mathfrak{g}^{\BC}$ is a Cartan subalgebra of $\mathfrak{g}^{\BC}$.

With this notation in hand, we define the roots of $\mathfrak{g}^{\BC}$ with respect to $\mathfrak{t}^{\BC}:$

\begin{definition}[Roots]
Let $\alpha \in (\mathfrak{t}^{\BC})^{*}$ be non-zero. Then set
\begin{align}
\mathfrak{g}^{\alpha} = \{X \in \mathfrak{g}^{\BC} : \text{~for all~} H \in \mathfrak{t}^{\BC}, \ad(H)X = \alpha(H)X\}
\end{align}
If $\mathfrak{g}^{\alpha} \neq \{0\}$, we call $\alpha$ a root of $\mathfrak{g}^{\BC}$ with root space $\mathfrak{g}^{\alpha}.$ The set of roots is denoted $\Delta(\mathfrak{g}^{\BC},\mathfrak{t}^{\BC})$. If the Lie algebra and Cartan subalgebra are already clear, we may denote the set of roots $\Delta.$
\end{definition}

Note that $\Delta \seq \sqrt{-1}\mathfrak{t}^{*}$ and that
\begin{align}
\mathfrak{g}^{\BC} = \mathfrak{t}^{\BC} \oplus \sum_{\alpha \in \Delta}\mathfrak{g}^{\alpha}. \label{gcdecomp}
\end{align}
It is now useful to divide the roots into two subsets:

\begin{definition}[Positive Root System]
Let $\Delta^{+} \seq \Delta$. We say that $\Delta^{+}$ is a choice of positive root system if 
\begin{itemize}
\item $\Delta^{+} \cap -\Delta^{+} = \emptyset$
\item $\Delta^{+} \cup -\Delta^{+} = \Delta$
\item If $\alpha, \beta \in \Delta^{+}$ and $\alpha + \beta \in \Delta$, then $\alpha + \beta \in \Delta^{+}.$
\end{itemize}
\end{definition}

Once we have a choice of positive roots, we get a further subset called the simple roots:

\begin{definition}[Simple Roots]
Any positive root that cannot be expressed as the sum of two other positive roots is called a simple root.
\end{definition}

Every root can be decomposed into an integer linear combination of simple roots. For positive roots, the coefficients are non-negative. 

Next, we define the highest root. Said imprecisely, with respect to a chosen positive root system the highest root is the one with the highest positive coefficients in the simple roots.

In other words, it can be shown that there is a unique positive root that is maximal with respect to the partial ordering on $\Delta^{+}$:
\begin{align}
\alpha \leq \beta \iff \beta - \alpha = 0 \text{~or~} \beta - \alpha \text{~is a positive linear combination of positive roots.}
\end{align}

\begin{definition}[Highest Root]
The highest root of $\Delta^{+}$ is the maximal root with respect to the above partial ordering. We denote it by $\theta.$
\end{definition}

Let $H$ be the dual element to $\theta$ via $\kappa$. In particular, $\theta(X) = \kappa(H,X)$ for all $X \in \mathfrak{t}^{\BC}.$ The normalization $\kappa$ we have chosen implies then that $\theta(H) = \kappa(H,H) = 2.$

We now choose an element $e \in \mathfrak{g}^{\theta}$ such that $H = [e,e^{\dagger}].$ This takes a little bit of work, but it is all standard. 

The first step is showing $[\mathfrak{g}^{\theta},\mathfrak{g}^{-\theta}] = \Span_{\BC}\{H\}.$ To do this, let $X \in \mathfrak{g}^{\theta}$, $Y \in \mathfrak{g}^{-\theta}$, and $Z \in \mathfrak{t}^{\BC}.$ Then,
\begin{align}
[Z,[X,Y]] = -[X,[Y,Z]] - [Y,[Z,X]] = -\theta(Z)[X,Y] + \theta(Z)[X,Y] = 0.
\end{align}
We know $\mathfrak{t}^{\BC}$ is maximally abelian, so $[X,Y] \in \mathfrak{t}^{\BC}.$ Then, we compute:
\begin{align}
\kappa([X,Y], Z) = \kappa(Y,[Z,X]) = \kappa(Y,\theta(Z)X) = \kappa(X,\kappa(H,Z)Y) = \kappa(\kappa(X,Y)H,Z)
\end{align}
so we conclude $[X,Y] = \kappa(X,Y)H$ by the nondegeneracy of $\kappa$ on the Cartan subalgebra $\mathfrak{t}^{\BC}$. This implies $[X,Y]$ is a scalar multiple of $H.$ 

Next, take any non-zero element $e' \in \mathfrak{g}^{\theta}.$ Then, $[e',(e')^{\dagger}] = zH.$ On the other hand,
\begin{align}
\kappa([e',(e')^{\dagger}],H) = \kappa((e')^{\dagger}, 2e') = 2|e'|^{2} > 0.
\end{align}
These two observations imply $z$ is a real number bigger than zero. Therefore we may normalize $e'$ to achieve $e$ satisfying $H = [e,e^{\dagger}].$ This will be very important when constructing our variation later. We might sometimes refer to $e^{\dagger}$ as $f \in \mathfrak{g}^{-\theta}.$ Denote by $V$ the $\mathfrak{sl}(2;\BC)$ subalgebra of $\mathfrak{g}^{\BC}$ generated by $e$, $f$, and $H$.

We will also need to understand parabolic subalgebras and flag manifolds.

\begin{definition}[Parabolic Subalgebra]
Let $\mathfrak{q} \seq \mathfrak{g}^{\BC}$ be a subalgebra. We say $\mathfrak{q}$ is parabolic if it contains a maximal solvable subalgebra.
\end{definition}

\begin{definition}[Parabolic Subgroup]
We say that $P \seq G^{\BC}$ is a parabolic subgroup if its Lie algebra is parabolic. Parabolic subgroups are necessarily connected.
\end{definition}

\begin{definition}[Flag Manifold]
Let $P \seq G^{\BC}$ be a parabolic subgroup. We say a quotient $G^{\BC}/P$ is a flag manifold. It is naturally a complex manifold. Note that $G$ acts transitively on $G^{\BC}/P$, so $G^{\BC}/P$ is diffeomorphic to the real quotient $G/(G \cap P).$
\end{definition}

We will now compute some useful facts about the adjoint action of $\mathfrak{g}^{\BC}$ on the highest root space. In particular, we show the stabilizer is parabolic.

It will be helpful to break our Lie algebra up into components depending on the eigenvalues of $\ad(H).$ Namely, define
\begin{align}
\mathfrak{g}_{k} = \{X \in \mathfrak{g}^{\BC} : \ad(H)X = kX\}
\end{align}
Clearly, $\mathfrak{t}^{\BC} \seq \mathfrak{g}_{0}.$ Also, each root space lies entirely inside a single $\mathfrak{g}_{k}.$ 

We now claim that the only non-zero $\mathfrak{g}_{k}$ have $|k| \leq 2.$ To see this, note that on $\mathfrak{g}^{\alpha}$, $k = \alpha(H)$. This tells us that
\begin{align}
|k| = |\alpha(H)| = |\kappa(H,H_{\alpha})| = |\langle \theta, \alpha \rangle| \leq (\sqrt{2})^{2} = 2
\end{align}
If we have equality above, then $\alpha = \pm \theta$. One can also show that for all $\alpha \in \Delta$, $\alpha(H) \in \BZ$ (see for instance Lecture 11 of \cite{FH91}). It then follows from (\ref{gcdecomp}) that:
\begin{align}
\mathfrak{g}^{\BC} = \Span_{\BC}\{f\} \oplus \mathfrak{g}_{-1} \oplus \mathfrak{g}_{0} \oplus \mathfrak{g}_{1} \oplus \Span_{\BC}\{e\}
\end{align}

With this said, we now want to compute the stabilizer of the action of $\ad(\mathfrak{g}^{\BC})$ on $\Span_{\BC}\{e\}.$

\begin{lemma}\label{stablemma}
One has $\operatorname{Stab}_{\mathfrak{g}^{\BC}}(\Span_{\BC}\{e\}) = \mathfrak{g}_{0} \oplus \mathfrak{g}_{1} \oplus \mathfrak{g}_{2}$, and $\mathfrak{g}_{0} = \Span_{\BC}\{H\} \oplus \mathfrak{c}$, where $\mathfrak{c} = Z_{\mathfrak{g}^{\BC}}(V).$
\end{lemma}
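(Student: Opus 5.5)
The plan is to prove both claims using the $\ad(H)$-grading $\mathfrak{g}^{\BC} = \mathfrak{g}_{-2}\oplus\mathfrak{g}_{-1}\oplus\mathfrak{g}_0\oplus\mathfrak{g}_1\oplus\mathfrak{g}_2$, with $\mathfrak{g}_{\pm 2} = \Span_{\BC}\{e\}$ and $\Span_{\BC}\{f\}$, together with the bracket rule $[\mathfrak{g}_j,\mathfrak{g}_k]\seq\mathfrak{g}_{j+k}$. This rule is immediate from the Jacobi identity, since $\ad(H)$ is a derivation.

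For the stabilizer, the inclusion $\supseteq$ is direct. If $X\in\mathfrak{g}_k$ with $k\ge 0$, then $[X,e]\in\mathfrak{g}_{k+2}$. When $k=0$ this lies in $\mathfrak{g}_2=\Span_{\BC}\{e\}$, and when $k\ge 1$ it lies in $\mathfrak{g}_{k+2}=0$. For $\subseteq$, write a general $X=\sum_k X_k$ with $X_k \in \mathfrak{g}_k$. The components $[X_k,e]\in\mathfrak{g}_{k+2}$ lie in distinct graded pieces, so $[X,e]\in\mathfrak{g}_2$ forces $[X_{-2},e]=0$ and $[X_{-1},e]=0$. For $k=-2$, $X_{-2}=cf$ and $[cf,e]=-cH$, so $c=0$. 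For $k=-1$ we must show that $\ad(e):\mathfrak{g}_{-1}\to\mathfrak{g}_1$ is injective. This follows from $\mathfrak{sl}_2$ representation theory applied to $V$ acting on $\mathfrak{g}^{\BC}$: a vector of $H$-weight $-1$ killed by $e$ would be a highest weight vector of negative weight, which is impossible in a finite-dimensional representation. Equivalently, in each irreducible $V$-submodule, $e$ maps the weight $-1$ space isomorphically onto the weight $1$ space.

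For the description of $\mathfrak{g}_0$, first note $\mathfrak{c}\seq\mathfrak{g}_0$ because $\mathfrak{c}$ commutes with $H$, and $H\notin\mathfrak{c}$ since $[H,e]=2e\neq 0$. So it suffices to show $\mathfrak{g}_0=\Span_{\BC}\{H\}+\mathfrak{c}$. Decompose $\mathfrak{g}^{\BC}$ into irreducible $V$-modules. Because the $H$-eigenvalues lie in $\{-2,\dots,2\}$, each irreducible summand has highest weight $0$, $1$, or $2$. Moreover, the weight $2$ space $\mathfrak{g}_2$ is one-dimensional, so exactly one summand has highest weight $2$, and that summand is $V$ itself. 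The weight-zero space $\mathfrak{g}_0$ then splits as $(V\cap\mathfrak{g}_0)\oplus(\text{trivial summands})$, since highest weight $1$ modules have no weight $0$. Here $V\cap\mathfrak{g}_0=\Span_{\BC}\{H\}$, and the sum of the trivial summands is exactly the set of vectors killed by $V$, namely $\mathfrak{c}$. One caveat: a weight-$0$ vector killed by $e$ and $f$ must lie in the trivial isotypic part, because in $V$ the weight-$0$ vector $H$ is not killed by $e$. This confirms that the trivial summands account for all of $\mathfrak{c}\cap\mathfrak{g}_0=\mathfrak{c}$. The sum is direct because $H\notin\mathfrak{c}$ and $\Span_{\BC}\{H\}$ is one-dimensional.

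The main obstacle is rigorously identifying the unique highest-weight-$2$ summand with $V$ and ensuring the multiplicity count is correct. I would handle this via $\dim\mathfrak{g}_2=1$, which holds because root spaces are one-dimensional and $\theta$ is the only root with $\alpha(H)=2$, as noted earlier. Alternatively, one can avoid the decomposition: given $X\in\mathfrak{g}_0$, set $X' = X-\tfrac{1}{2}\kappa(X,H)H$. Then $[X',e]\in\mathfrak{g}_2$ equals $\lambda e$ for some $\lambda$, and the invariance computation $\lambda\,\kappa(e,f)=\kappa([X',e],f)=\kappa(X',[e,f])=\kappa(X',H)=0$ gives $\lambda=0$; applying $\dagger$ and arguing similarly gives $[X',f]=0$, so $X'\in\mathfrak{c}$. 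I would present this second, more elementary argument as the main proof of the $\mathfrak{g}_0$ statement.
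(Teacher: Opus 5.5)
Your proof is correct and follows essentially the same route as the paper. Both use the $\ad(H)$-grading to reduce to injectivity of $\ad(e)$ on $\mathfrak{g}_{-2}$ and $\mathfrak{g}_{-1}$, and both use invariance of $\kappa$ to show that the $\kappa$-complement of $H$ in $\mathfrak{g}_0$ centralizes $e$ and $f$; your identity $\lambda\,\kappa(e,f)=\kappa(X',H)=0$ is slightly more direct than the paper's three-way evaluation of $\kappa([X,e],[X,f])$, and on $\mathfrak{g}_{-1}$ you cite $\mathfrak{sl}_2$ highest-weight theory where the paper computes $X_{-1}=[[f,X_{-1}],e]=0$ via Jacobi and $\mathfrak{g}_{-3}=0$.
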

\begin{proof}
First, we show $\mathfrak{g}_{0} = \Span_{\BC}\{H\} \oplus \mathfrak{c}$. Suppose $X \in \operatorname{span}_{\BC}\{H\} \oplus \mathfrak{c}.$ Then, $[H,X] = 0$. Therefore, $\operatorname{span}_{\BC}\{H\} \oplus \mathfrak{c} \seq \mathfrak{g}_{0}.$ On the other hand, say $X$ is an element of $\mathfrak{g}_{0}$ such that $\kappa(X, H) = 0$ (note $H = H^{\dagger}$, so this is orthogonality with respect to the Hermitian inner product). We show $X \in Z_{\mathfrak{g}^{\BC}}(V)$. To this end, we compute
\begin{align}
[X,H] &= 0 \\ \non \\
[H,[X,e]] &= [[e,H],X] + [[H,X],e] = 2[X,e] \\ \non \\
[H,[X,f]] &= -[X,[f,H]] - [f, [H,X]] = -2[X,f]
\end{align}
Then, $[X,e] \in \mathfrak{g}_{2}.$ Therefore, $[X,e] = ce.$ Similarly, $[X,f] \in \mathfrak{g}_{-2}.$ Then $[X,f] = c'f.$ Now, 
\begin{align}
\kappa([X,e],[X,f]) &= \kappa(e, -(c')^{2}f) = -(c')^{2}|e|^{2} \\ \non \\
\kappa([X,e],[X,f]) &= \kappa(-c^{2}e,f) = -c^{2}|e|^{2} \\ \non \\
\kappa([X,e],[X,f]) &= \kappa(X,c'[e,f]) = c'\kappa(X,H) = 0.
\end{align}
Therefore, $c = c' = 0$; we verify that $X \in Z_{\mathfrak{g}^{\BC}}(V).$ Therefore, $\mathfrak{g}_{0} = \operatorname{span}_{\BC}\{H\} \oplus \mathfrak{c}.$ 

To find the entire stabilizer of $\Span_{\BC}\{e\}$, note that if $X \in \mathfrak{g}_{k}, Y \in \mathfrak{g}_{\ell}$, then 
\begin{align}
[H,[X,Y]] = -[X,[Y,H]] - [Y,[H,X]] = (k + \ell)[X,Y] \label{k+ellbracketid}
\end{align}
i.e., $[X,Y] \in \mathfrak{g}_{k + \ell}.$ Therefore, $\mathfrak{g}_{0}, \mathfrak{g}_{1}$, and $\mathfrak{g}_{2}$ are in the stabilizer. 

We then show that none of the other $\mathfrak{g}_{j}$ can contribute to the stabilizer. By (\ref{k+ellbracketid}), it suffices to show that $\ad(e)|_{\mathfrak{g}_{-1}}$ and $\ad(e)|_{\mathfrak{g}_{-2}}$ have trivial kernel. Note that by linearity it suffices to check the adjoint action of $e$ itself.

We focus first on $\mathfrak{g}_{-1}.$ Let $X_{-1} \in \mathfrak{g}_{-1}.$ Then, if $[X_{-1},e] = 0$, we must have
\begin{align}
0 = [[X_{-1},e],f] = -[[f,X_{-1}],e] - [[e,f],X_{-1}] = -[[f,X_{-1}],e] + X_{-1}.
\end{align}
In other words, by (\ref{k+ellbracketid}),
\begin{align}
X_{-1} = [[f,X_{-1}],e] = [0,e] = 0.
\end{align}
Therefore, $\ker(\ad(e)|_{\mathfrak{g}_{-1}}) = \{0\}$. This means that $\mathfrak{g}_{-1}$ cannot contribute to the stabilizer.

The last case to check is $\mathfrak{g}_{-2}.$ This turns out to be the easiest. Let $X_{-2} \in \mathfrak{g}_{-2}.$ Then $X_{-2} = cf$ for some constant $c.$ Then, $\ad(cf)e = -cH$. This is zero only when $c = 0.$ Therefore, $\ker(\ad(e)|_{\mathfrak{g}_{-2}}) = \{0\}.$ This finishes the proof.
\end{proof}
Set $\mathfrak{q}_{\theta} = \operatorname{Stab}_{\mathfrak{g}^{\BC}}(\Span_{\BC}\{e\})$. We see that $\mathfrak{q}_{\theta}$ is parabolic because it contains the Borel subalgebra 
\begin{align}
\mathfrak{t}^{\BC} \oplus \sum_{\alpha \in \Delta^{+}}\mathfrak{g}^{\alpha} \seq \mathfrak{g}^{\BC}
\end{align}
This means that the corresponding subgroup is also parabolic. In what follows, we call this subgroup $P_{\theta}.$ Note also that for the $\Ad$ action we have $\operatorname{Stab}_{G^{\BC}}(\Span_{\BC}\{e\}) = P_{\theta}.$
\section{Holomorphic Bundles}
To begin, we recall a special case of a theorem of Koszul and Malgrange. See also \cite{BR90} Chapter 2 for more details.
\begin{theorem}[Koszul and Malgrange \cite{KM58}]
A connection on a principal bundle with complex structure group over $S^{2}$ determines a unique compatible holomorphic structure. The same result holds for complex vector bundles over $S^{2}.$
\end{theorem}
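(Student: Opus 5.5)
The statement concerns a base curve $\Sigma = S^{2}$, so I would reduce it to the Newlander--Nirenberg theorem in complex dimension one, where integrability is automatic. First I would treat a complex vector bundle $E \to S^{2}$ with a connection $\nabla$. I take the candidate holomorphic structure to be $\bar\partial_{E} := \nabla^{0,1}$, the $(0,1)$-part of $\nabla$. This operator satisfies the Leibniz rule $\bar\partial_{E}(fs) = \bar\partial f \otimes s + f\bar\partial_{E}s$. Its square $\bar\partial_{E}^{2} = (F^{\nabla})^{0,2}$ vanishes identically, because $\Lambda^{0,2}T^{*}S^{2} = 0$ on a Riemann surface. "Compatible" means precisely that the holomorphic sections are the local solutions of $\nabla^{0,1}s = 0$. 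So the content of the theorem is the existence, near every point, of a frame of such solutions.

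To produce that frame, I would work on a coordinate disc $D$ with holomorphic coordinate $z$ and a smooth trivialization. There the condition reads $\partial_{\bar z}s + A s = 0$, with $A = A_{\bar z}$ a smooth matrix-valued function. I would look for a gauge transformation $g : D' \to GL(r,\BC)$ on a smaller disc with $g^{-1}\partial_{\bar z}g = -A$; the columns of $g$ are then a holomorphic frame.
\begin{itemize}
\item I first solve the linear problem $\partial_{\bar z}B = -A$ with the Cauchy transform on a shrunken disc.
\item I then treat the nonlinear equation as a perturbation of the identity. After rescaling $z \mapsto \epsilon z$, $A$ becomes small in $C^{k,\alpha}$. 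I write $g = I + h$ with $\partial_{\bar z}h = -A(I+h)$ and solve by a contraction argument, using that the Cauchy transform $T$ is bounded on Hölder spaces. Smallness makes $h \mapsto -T(A(I+h))$ a contraction.
\item Elliptic regularity then makes $g$ smooth, and for $\epsilon$ small $g$ is invertible.
\end{itemize}

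Uniqueness follows from the same local picture. Transition maps between two such frames are holomorphic, since they intertwine solutions of $\partial_{\bar z}s + As = 0$. Any holomorphic structure compatible with $\nabla$ has the solutions of $\nabla^{0,1}s = 0$ as its holomorphic sections, so it agrees with this one. Gluing the local frames gives the holomorphic structure on $E$.

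For a principal $G^{\BC}$-bundle $P$ with connection, I would run the same argument with the connection form. Locally its $(0,1)$-part is a $\mathfrak{g}^{\BC}$-valued function $A$, and I need $g : D' \to G^{\BC}$ with $g^{-1}\bar\partial g = -A$. I would do this by embedding $G^{\BC} \subseteq GL(N,\BC)$ through a faithful representation and solving the matrix problem above. The difficulty is that this yields $g$ in $GL(N,\BC)$ rather than in $G^{\BC}$. It is cleaner to solve directly in the group, by the implicit function theorem applied to $g = \exp(h)$ in Hölder spaces: the linearization at $h=0$ is $\partial_{\bar z}$, which is surjective with right inverse $T$. The resulting local holomorphic sections have $G^{\BC}$-valued holomorphic transition functions, which defines the holomorphic principal bundle.

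The main obstacle is this local solvability step, the $\bar\partial$-problem with small data. The key input is the Hölder bound for the Cauchy transform; everything else is formal because the curvature obstruction $(F^{\nabla})^{0,2}$ vanishes in complex dimension one.
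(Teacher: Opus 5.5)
The paper does not prove this statement. It quotes it from \cite{KM58}, with \cite{BR90}, Chapter 2, as a reference, and uses it only for the product bundle $S^{2}\times G^{\BC}$ with connection $d+\ad(C)$ and for $\ad P$. So there is no internal argument to match. Your sketch is the standard self-contained proof in complex dimension one, and it is correct.

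Compared with simply invoking Koszul--Malgrange, the two routes differ as follows:
\begin{itemize}
\item Koszul--Malgrange works over any complex base and needs $(F^{\nabla})^{0,2}=0$ as a hypothesis.
\item Your route uses that this condition is vacuous on a curve. The only analytic input is then local solvability of the gauge equation near the identity, via the H\"older mapping properties of the Cauchy transform.
\item Uniqueness and gluing are then formal, exactly as you say. This one-dimensional case is all the paper needs.
\end{itemize}

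A few points to tidy up.

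\textbf{Direction of the gauge equation.} With $\bar\partial_{E} = \bar\partial + A\,d\bar z$ acting on column vectors, a matrix whose columns solve $\partial_{\bar z}s + As = 0$ satisfies $\partial_{\bar z}g = -Ag$, i.e.\ $(\partial_{\bar z}g)g^{-1} = -A$. It does not satisfy $g^{-1}\partial_{\bar z}g = -A$. Your second bullet already uses the correct equation. Similarly, in the principal case the section $sg$ has vanishing $(0,1)$-connection form iff $(\bar\partial g)g^{-1} = -A^{0,1}$, equivalently $g^{-1}\bar\partial g = -\Ad(g^{-1})A^{0,1}$.

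\textbf{Smallness after rescaling.} The smallness comes from $A$ being the coefficient of $d\bar z$. With $z=\epsilon w$ the new coefficient is $\epsilon A(\epsilon w)$. Its $C^{k,\alpha}$ norm on the unit disc is at most $\epsilon\|A\|_{C^{k,\alpha}}$ for $\epsilon\le 1$. The factor $\epsilon$ from the one-form is what makes the $C^{0}$ part small, not merely the derivatives.

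\textbf{The faithful-representation route also works.} Since $A$ takes values in $\mathfrak{g}^{\BC}$, the vector $\partial_{\bar z}g=-Ag$ is tangent to the coset $G^{\BC}g$. Hence the class of $g$ in $G^{\BC}\backslash GL(N,\BC)$ depends holomorphically on $z$. If $\tau$ is a local holomorphic lift of this class, then $g\tau^{-1}$ is $G^{\BC}$-valued and still satisfies $\partial_{\bar z}(g\tau^{-1}) = -A\,g\tau^{-1}$. The implicit function argument with $g=\exp(h)$ is equally fine.
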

Consider the product principal $G^{\BC}$-bundle $\pi : P = S^{2} \times G^{\BC} \to S^{2}.$ We equip this bundle with the holomorphic structure induced via the $\mathfrak{g}$-valued one-form $C = \frac{1}{2}\alpha$, where $\alpha = \phi^{*}\omega$ is the pullback of the left Maurer-Cartan form $\omega$ of $G$ by a nonconstant harmonic map $\phi : S^{2} \to G$. This connection on $P$ induces a connection and hence holomorphic structure on $\ad P.$

Now, because $P$ is the product bundle, we have a smooth global section 
\begin{align}
s : S^{2} \to S^{2} \times G^{\BC} : x \mapsto (x,1_{G^{\BC}}).
\end{align}
This induces a smooth isomorphism of $\ad P$ and $S^{2} \times \mathfrak{g}^{\BC}.$ Explicitly it is given by 
\begin{align}
T : \ad P \to S^{2} \times \mathfrak{g}^{\BC} : [sg,\xi] \mapsto (\pi(s),\Ad(g)\xi)
\end{align}
The connection induced on the product bundle $S^{2} \times \mathfrak{g}^{\BC}$ is $D = d + \ad(C).$ The crucial property of this connection is that it coincides with the complex linear extension of the pullback connection on $\phi^{*}TG^{\BC}.$ Therefore, the holomorphic structures on $\ad P$, $S^{2} \times \mathfrak{g}^{\BC}$, and $\phi^{*}TG^{\BC}$ are all equivalent. In particular, it will eventually suffice to work on the product bundle $S^{2} \times \mathfrak{g}^{\BC}.$ 

We have two main structural equations for $\alpha$. The first is the zero-curvature equation:
\begin{align}
d\alpha + \dfrac{1}{2}[\alpha,\alpha] = 0 \implies \p_{z}\alpha_{\bar{z}} - \p_{\bar{z}}\alpha_{z} + [\alpha_{z},\alpha_{\bar{z}}] = 0.
\end{align}

Also, the harmonic map equation can be written:
\begin{align}
d^{*}\alpha = 0 \implies \p_{z}\alpha_{\bar{z}} + \p_{\bar{z}}\alpha_{z} = 0.
\end{align}

Set now $A = \frac{1}{2}\alpha_{z}$, where $\alpha_{z}$ is the $z$-component of $\alpha$ when extended complex linearly. Note that $D_{\bar{z}} = \p_{\bar{z}} + \frac{1}{2}\ad(\alpha_{\bar{z}}).$ Subtracting the zero-curvature equation and the harmonic map equation, we compute that
\begin{align}
D_{\bar{z}}\alpha_{z} = \p_{\bar{z}}\alpha_{z} + \dfrac{1}{2}\ad(\alpha_{\bar{z}})\alpha_{z} = 0.
\end{align}
In other words, $D_{\bar{z}}A = 0.$

Next, let us compute the curvature of $C.$ Note that $C$ is the connection on $P$ written in the gauge $s$. Recalling the definition of curvature, we see that locally:
\begin{align}
F_{C} &= F_{z\bar{z}}dz\wedge d\bar{z} \\ \non \\
&= \left (\dfrac{1}{2}\p_{z}\alpha_{\bar{z}} - \dfrac{1}{2}\p_{\bar{z}}\alpha_{z} + \dfrac{1}{4}[\alpha_{z},\alpha_{\bar{z}}]\right )dz\wedge d\bar{z} \\ \non \\
&= \left (-\dfrac{1}{2}[\alpha_{z},\alpha_{\bar{z}}] + \dfrac{1}{4}[\alpha_{z},\alpha_{\bar{z}}]\right )dz \wedge d\bar{z} \\ \non \\
&= \left (-\dfrac{1}{4}[\alpha_{z},\alpha_{\bar{z}}]\right )dz\wedge d\bar{z} \\ \non \\
&= [A,A^{\dag}]dz\wedge d\bar{z}
\end{align}
Here we used that $\alpha_{z}^{\dagger} = -\alpha_{\bar{z}}$ because $\alpha$ is the complex linear extension of a real form. Now set $\Phi = Adz$, and let $K$ be the canonical bundle of $S^{2}$. The above says that $\Phi \in H^{0}(S^{2},\ad P \otimes K)$. Note that $K = \mcl O(-2)$ in our case.

\section{Grothendieck's Theorem}

We begin this section with a statement of Grothendieck's Theorem about holomorphic principal bundles over $S^{2}$. We refer the reader to Grothendieck \cite{Gro57} or Burstall and Rawnsley \cite{BR90} chapter six for more details. In particular, Burstall and Rawnsley introduce the following root bundle constructions in their work.

\begin{theorem}[Grothendieck's Theorem, \cite{Gro57}]
Let $P \to S^{2}$ be a holomorphic principal $G^{\BC}$-bundle, where $G^{\BC}$ is a complex reductive Lie group. Let $Y \to S^{2}$ be the principal $\BC^{*}$-bundle defined by removing the zero section of $\mcl O(1)$. There exists a holomorphic homomorphism $\chi : \BC^{*} \to G^{\BC}$ such that $P$ is holomorphically isomorphic to $Y \times_{\chi} G^{\BC}$. Moreover, $\chi$ is unique up to conjugation.
\end{theorem}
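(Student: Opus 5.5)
The plan is to reduce to the case of a maximal torus and then glue with Čech cocycles, in three steps. First, the reduction to Birkhoff factorization. Cover $S^{2}$ by $U_{0} = \{|z| < \infty\}$ and $U_{\infty} = \{|z| > 0\}$. Both are Stein, in fact contractible open Riemann surfaces. On each of them, every holomorphic principal bundle with connected complex structure group is holomorphically trivial, by Grauert's Oka principle combined with topological triviality over a contractible base. If $G^{\BC}$ is not connected, we first treat the identity component; the component group contributes only a locally constant, hence trivial, datum over a simply connected base. With this in hand, $P$ is described up to isomorphism by a single holomorphic transition function $g : \BC^{*} \to G^{\BC}$. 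Changing trivializations replaces $g$ by $h_{\infty}\,g\,h_{0}$, where $h_{0}$ is holomorphic on $\BC$ and $h_{\infty}$ is holomorphic on $S^{2}\setminus\{0\}$. The statement therefore becomes the Birkhoff factorization theorem for the loop group: every holomorphic $g : \BC^{*} \to G^{\BC}$ can be written as $g = h_{\infty}\,\chi\,h_{0}$ with $\chi : \BC^{*} \to G^{\BC}$ a homomorphism. Note that $Y \times_{\chi} G^{\BC}$ has exactly the transition function $\chi(z)$, because $\mcl O(1)$ has transition function $z$, up to the sign convention.

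Second, existence. Take a faithful representation $G^{\BC} \subseteq GL(n,\BC)$. For vector bundles, the analogous statement is the splitting of any holomorphic vector bundle on $S^{2}$ into line bundles $\mcl O(a_{i})$. I would prove this in the standard way: take a line subbundle of maximal degree, use $H^{1}(S^{2},\mcl O(k)) = 0$ for $k \geq -1$ to split the resulting extension, and induct on rank. This already factors $g$ in $GL(n)$. The actual work is to obtain the factorization inside $G^{\BC}$ itself. For that I would use the standard route via reduction of structure group to a Borel subgroup $B$. Since $G^{\BC}/B$ is projective and the base is a curve, $P$ admits a holomorphic reduction to $B$; the Harder--Narasimhan/Tsen argument gives a holomorphic section of $P/B$. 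Next, $B = T^{\BC} \ltimes N$ with $N$ unipotent, and $N$ has a filtration by root-subgroups whose successive quotients are vector groups. Each step gives an extension class in some $H^{1}(S^{2},\mcl O(k))$. Choosing the $B$-reduction of maximal degree, in the sense of the Harder--Narasimhan / Atiyah–Bott canonical reduction, ensures that the relevant degrees satisfy $k \geq -1$. These $H^{1}$ groups then vanish and the bundle reduces further to $T^{\BC}$. A holomorphic $T^{\BC} \cong (\BC^{*})^{r}$ bundle is a sum of line bundles $\mcl O(a_{i})$, which is precisely $Y \times_{\chi} T^{\BC}$ for the cocharacter $\chi(z) = (z^{a_{1}},\dots,z^{a_{r}})$.

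Third, uniqueness up to conjugation. The isomorphism class of $\ad P$, or of $P\times_{G^{\BC}}W$ for each representation $W$, determines the multiset of weights $\langle \mu, \chi\rangle$ as splitting degrees. Here I would use that the splitting type on $S^{2}$ is unique, since it is read off from $h^{0}(E\otimes\mcl O(k))$. Letting $W$ run over all representations determines $\chi$ up to the Weyl group, and hence up to conjugation in $G^{\BC}$.

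The main obstacle I expect is the reduction from $B$ to $T^{\BC}$, that is, arranging the degree conditions so that every $H^{1}$ obstruction vanishes. If that step becomes unwieldy, I would fall back on the loop-group Birkhoff decomposition $LG^{\BC} = \bigsqcup_{\chi} L^{-}G^{\BC}\,\chi\,L^{+}G^{\BC}$ (Pressley–Segal), applied to the holomorphic transition function. One remaining issue in that route is that a holomorphic loop admits holomorphic, not just smooth, Birkhoff factors, and I would settle it by Grauert's theorem.
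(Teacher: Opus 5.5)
The paper gives no argument for this statement. It quotes \cite{Gro57} and refers to \cite{BR90}, Chapter 6, so your outline is necessarily a different route. It is the standard Harder-style proof: Grauert on the two affine charts to reduce to one transition function on $\BC^{*}$, then a Borel reduction, then descent from $B$ to $T^{\BC}$ along a central series of $N$, then uniqueness from splitting types. Note that the section of $P/B$ comes from Tsen and Steinberg (generic triviality) plus properness of $G^{\BC}/B$, not from Harder--Narasimhan. What this route buys over a bare citation is that it produces $\chi$ already dominant for $B$. That is exactly the normalization $\alpha(\xi)/\sqrt{-1}\geq 0$ on $\Delta^{+}$ that the paper imposes right after the statement. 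Your uniqueness step is fine once you add that characters of finite-dimensional representations separate semisimple conjugacy classes. Apply this to $\chi(t)$ with $t$ not a root of unity; Zariski density then gives a single conjugating element.

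The gap is the step you yourself flag: it is asserted, not proved, and as phrased it is not right. The Atiyah--Bott/Harder--Narasimhan canonical reduction is to a parabolic, not to $B$. Also, the degree of a $B$-reduction is a cocharacter $\mu\in X_{*}(T)$, not a number, so ``maximal'' must be specified. A workable choice is to maximize $\langle 2\rho,\mu\rangle$. This is bounded above, because it is the degree of the $B$-stable highest-weight line subbundle of the fixed bundle $P\times_{G^{\BC}}V_{2\rho}$.

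Moreover, the conclusion you need is not ``$k\geq -1$'' on simple roots. That would allow degree $-2$ on $\alpha_{1}+\alpha_{2}$, where $H^{1}(\mcl O(-2))\neq 0$. What you need is dominance, $\langle\alpha_{i},\mu\rangle\geq 0$ for all simple $\alpha_{i}$, which gives $\geq 0$ on every root of $\mathfrak{n}$.

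The missing lemma is a rank-one argument. Suppose $\langle\alpha_{i},\mu\rangle<0$ and extend the reduction to the minimal parabolic $P_{\alpha_{i}}$.
The $B$-reductions of that $P_{\alpha_{i}}$-bundle are the sections of a $\mathbb{P}^{1}$-bundle $\mathbb{P}(E)$ with $E$ of rank two, that is, line subbundles $\ell\subset E$. For such a section, $\langle\alpha_{i},\mu\rangle=2\deg\ell-\deg E$, which is minus the degree of its normal bundle $\operatorname{Hom}(\ell,E/\ell)$. Taking $\ell$ of maximal degree (by rank-two splitting) gives a new $B$-reduction $\mu'$ of $P$ with $\langle\alpha_{i},\mu'\rangle\geq 0$. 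Since both reductions induce the same bundle for the abelianization of the Levi factor, $\mu'-\mu\in\mathbb{Q}\alpha_{i}^{\vee}$. Hence $\mu'=\mu+c\,\alpha_{i}^{\vee}$ with $c>0$, which contradicts maximality because $\langle\rho,\alpha_{i}^{\vee}\rangle=1$. With this lemma your argument closes.

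Your fallback is Grothendieck's theorem in loop-group form, so it amounts to the same citation the paper makes. Its holomorphy issue needs no Grauert. Writing $g|_{S^{1}}=\gamma_{-}\lambda\gamma_{+}$, the Pressley--Segal factors are boundary values of holomorphic maps on the two discs. Then $\lambda^{-1}\gamma_{-}^{-1}g$ continues $\gamma_{+}$ holomorphically across $|z|=1$ by Morera.
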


Let now $P \to S^{2}$ be as in our last section. It is then a standard fact that $\chi$ may be written in the form 
\begin{align}
\chi : \BC^{*} \to G^{\BC} : \exp(\sqrt{-1}z) \mapsto \exp_{G^{\BC}}(z\xi)
\end{align}
for some element $\xi \in \mathfrak{g}^{\BC}.$ It turns out that we can conjugate $\chi$ so that $\xi$ is an element of our favorite maximal toral subalgebra $\mathfrak{t} \seq \mathfrak{g}.$ By construction, $\chi$ then factors through $T^{\BC} = \exp_{G^{\BC}}(\mathfrak{t}^{\BC}).$ Fix a choice of positive roots $\Delta^{+}$ with respect to $\mathfrak{t}^{\BC}$. Up to the action of the Weyl group, $\xi$ satisfies $\alpha(\xi)/\sqrt{-1} \geq 0$ for all positive roots $\alpha$. Next, we will do a root line bundle decomposition of $Y \times_{\Ad \circ \chi} \mathfrak{g}^{\BC}.$

Define the holomorphically trivial sub-bundle
\begin{align}
\mathfrak{a} = Y \times_{\Ad \circ \chi} \mathfrak{t}^{\BC} \seq Y \times_{\Ad \circ \chi} \mathfrak{g}^{\BC}
\end{align}
This is holomorphically trivial because $\Ad|_{T^{\BC}}$ acts as the identity on $\mathfrak{t}^{\BC}$. We then get holomorphic root sub-bundles of $Y \times_{\Ad \circ \chi} \mathfrak{g}^{\BC}$: 
\begin{align}
L^{\alpha} = Y \times_{\Ad \circ \chi} \mathfrak{g}^{\alpha}
\end{align}
where $\mathfrak{g}^{\alpha}$ is the root space corresponding to $\alpha \in \Delta(\mathfrak{g}^{\BC},\mathfrak{t}^{\BC})$.

Finally, one can easily compute the first Chern class of each of the root line bundles. Note that the associating map is $\exp(\sqrt{-1}z) \mapsto \exp_{G^{\BC}}(z\xi) \mapsto \Ad(\exp_{G^{\BC}}(z\xi))$, which evaluates to $\alpha(\exp_{G^{\BC}}(z\xi))X$ on $X \in \mathfrak{g}^{\alpha}.$ Hence, it is $e^{\sqrt{-1}z} \mapsto e^{\sqrt{-1}z\alpha(\xi)/\sqrt{-1}}.$ In other words it is the map $\BC^{*} \to \BC^{*} : w \mapsto w^{\alpha(\xi)/\sqrt{-1}}.$ 

Here, $\alpha(\xi)/\sqrt{-1} \in \BZ$. This is because $\exp_{G^{\BC}}(2\pi\xi) = 1_{G^{\BC}}$ and hence 
\begin{align}
X_{\alpha} = \Ad(\exp_{G^{\BC}}(2\pi \xi))X_{\alpha} = e^{2\pi \alpha(\xi)}X_{\alpha}. 
\end{align}
Notice also that we have identified $\alpha$ with its associated character $T^{\BC} \to \BC^{*}$ when deducing the associating map. 

In what follows, we will use the convention that $c_{1}(\mcl O(1))[S^{2}] = 1.$ We will also denote $c_{1}(E)[S^{2}]$ by just $c_{1}(E)$ for line bundles $E \to S^{2}.$ 

From the above constructions, one sees that $Y \times_{\Ad \circ \chi} \mathfrak{g}^{\BC}$ is holomorphically isomorphic to a direct sum of the form:

\begin{align}
\mathfrak{a} \oplus \sum_{\alpha \in \Delta}L^{\alpha} = \mcl O^{\oplus r} \oplus \sum_{\alpha \in \Delta}\mcl O(\alpha(\xi)/\sqrt{-1})
\end{align}
where $r$ is the rank of $G$, i.e., the dimension of a maximal torus. Now note that $Y \times_{\Ad \circ \chi} \mathfrak{g}^{\BC}$ is isomorphic to $\ad P$ by the above theorem. Therefore, the above decomposition of $Y \times_{\Ad \circ \chi} \mathfrak{g}^{\BC}$ induces a holomorphic direct sum decomposition of $\ad P.$ From the decomposition above we can conclude that because $\Phi$ is a holomorphic section of $\ad P \otimes \mcl O(-2)$ it must lie entirely in the summands where $\alpha(\xi)/\sqrt{-1} \geq 2.$ This is because
\begin{align}
H^{0}(S^{2},\ad P \otimes \mcl O(-2)) &= H^{0}\left (S^{2}, \mcl O(-2)^{\oplus r} \oplus \sum_{\alpha \in \Delta}\mcl O(\alpha(\xi)/\sqrt{-1} - 2)\right ) \\
&= H^{0}(S^{2},\mcl O(-2))^{\oplus r} \oplus \sum_{\alpha \in \Delta}H^{0}(S^{2}, \mcl O(\alpha(\xi)/\sqrt{-1} - 2))
\end{align}
and the only holomorphic line bundles with nonzero holomorphic sections over $S^{2}$ are $\mcl O(k)$ for $k \geq 0.$

Now we arrive at the key point. Let $\theta$ be the highest root, then one has that $\theta - \alpha$ is a non-negative integer linear combination of positive roots for any $\alpha \in \Delta^{+}$. Then, $(\theta - \alpha)(\xi)/\sqrt{-1} \geq 0$. In particular, suppose $\alpha$ is one of the roots corresponding to a nonzero component of $\Phi$. Such a component exists because $\phi$ is nonconstant. Then we conclude $\theta(\xi)/\sqrt{-1} \geq \alpha(\xi)/\sqrt{-1} \geq 2.$ Thus, 
\begin{align}
c_{1}(L^{\theta}) \geq 2. \label{c_1(L)>=2}
\end{align}
From here on, we denote $L = L^{\theta}$ to emphasize that this is the most important root sub-bundle in our construction. Also, because $Y \times_{\Ad \circ \chi} \mathfrak{g}^{\BC}$ is isomorphic to $\ad P$, we will write $L$ as the corresponding holomorphic line bundle in $\ad P.$

Next, notice that the Lie bracket induces a bracket on $\ad P.$ Namely, let $[p,\xi]$ and $[p,\eta]$ be elements of $(\ad P)_{x}.$ One has $[[p,\xi], [p,\eta]] = [p,[\xi,\eta]]$. This is clearly independent of $p$ because $\Ad$ commutes with the Lie bracket. One can check this gives a global bracket on $\ad P$. It also agrees with the analogous bracket on $Y \times_{\Ad \circ \chi} \mathfrak{g}^{\BC}$. We will make precise all of the isomorphisms that we used above in (\ref{grothendieckisomorphisms}) of the following section. Indeed, one can check that brackets are preserved.

A special consequence of the above construction is that for roots $\alpha \neq -\beta$ in $\Delta$, the relation $[\mathfrak{g}^{\alpha},\mathfrak{g}^{\beta}] \seq \mathfrak{g}^{\alpha + \beta}$ is preserved at the bundle level. We may then conclude that 
\begin{align}
[\Phi,L] = 0 \label{phibracketl}
\end{align}
because only positive roots contribute to $\Phi$ and the sum of a positive root with the highest root has trivial root space.

\section{Constructing the Variation}
We will now construct our variation. To begin, we will construct local gauges in which we can understand the root line bundle $L \seq \ad P$ in terms of our compact group $G$. We want to use the compact group specifically so our variation has computable norm.

To this end we first define a map $q : S^{2} \to \mathbb{P}(\mathfrak{g}^{\BC})$ by associating a complex line $L_{x} \seq \mathfrak{g}^{\BC}$ to each $x \in S^{2}.$ To make this identification, we transfer all of our constructions above to the product bundle $S^{2} \times \mathfrak{g}^{\BC} \to S^{2}.$

To do this, we compose all of our various bundle isomorphisms from the previous section. We recall that our root line bundles $L^{\alpha}$ are defined as bundles associated to $Y.$ Therefore, we begin with a bundle $L^{\alpha} = Y \times_{\Ad \circ \chi}\mathfrak{g}^{\alpha}.$ This bundle naturally sits inside $Y \times_{\Ad \circ \chi}\mathfrak{g}^{\BC}$ as a holomorphic sub-bundle. Now, $Y \times_{\Ad \circ \chi} \mathfrak{g}^{\BC}$ is isomorphic to $(Y \times_{\chi} G^{\BC}) \times_{\Ad} \mathfrak{g}^{\BC}$, and $Y \times_{\chi} G^{\BC}$ is isomorphic to $P$ via Grothendieck's theorem. This means $\ad P$ can be identified with $Y \times_{\Ad \circ \chi} \mathfrak{g}^{\BC}$. Hence, $L^{\alpha}$ is identified with a holomorphic sub-bundle of $\ad P.$ Now, identifying $\ad P$ with the product bundle we get holomorphic root line sub-bundles of the product bundle. Note that the holomorphically trivial sub-bundle of Cartan subalgebras can similarly be viewed as a holomorphic sub-bundle of the product bundle. Explicitly, the identification is:
\begin{align}
L^{\alpha} = Y \times_{\Ad \circ \chi} \mathfrak{g}^{\alpha} &\hookrightarrow Y \times_{\Ad \circ \chi} \mathfrak{g}^{\BC} = (Y \times_{\chi} G^{\BC}) \times_{\Ad} \mathfrak{g}^{\BC} = \ad P = S^{2} \times \mathfrak{g}^{\BC} \\
[y,\xi] &\mapsto [y,\xi] \mapsto [[y,1_{G^{\BC}}],\xi] \mapsto [sg,\xi] = [s,\Ad(g)\xi] \mapsto (x,\Ad(g)\xi) \label{grothendieckisomorphisms}
\end{align}
Here, recall that $s : x \mapsto (x,1_{G^{\BC}})$ is our chosen section of $P$. Note also that if we had chosen $[yz,\xi]$ instead, we would map to $(x,\alpha(\chi(z))\Ad(g)\xi)$. In particular, the line is preserved. This means that one can identify a line $L_{x} \seq \mathfrak{g}^{\BC}$ corresponding to each point of $S^{2}.$

With this technical issue resolved, let us see what $q$ can tell us. Note that $q$ is smooth because $L$ is a smooth sub-bundle of the product. Next, let $e$ be the same element of $\mathfrak{g}^{\theta}$ from Section \ref{liealgebrasection}. The image of $q$ is a subset of the Adjoint orbit $\Ad(G^{\BC})[\Span_{\BC}\{e\}] \seq \mathbb{P}(\mathfrak{g}^{\BC}).$ From Lemma \ref{stablemma}, one sees that at the Lie algebra level the stabilizer is $\mathfrak{q}_{\theta} = \mathfrak{g}_{0} \oplus \mathfrak{g}_{1} \oplus \mathfrak{g}_{2}$. Recall that this is parabolic with associated subgroup $P_{\theta}$. This means that $q$ is valued in the compact flag manifold $G^{\BC}/P_{\theta}.$ Recall also that the compact real form acts transitively on the flag manifold, so $q$ can be viewed as $G/M_{\theta}$-valued where $M_{\theta} = G \cap P_{\theta}.$ Let $\mathfrak{m}_{\theta}$ denote the Lie algebra of $M_{\theta}$.

We conclude that a local section of the bundle $q^{*}(G \to G/M_{\theta})$ yields a map $k_{i} : U_{i} \seq S^{2} \to G$ such that $L_{x} \seq \mathfrak{g}^{\BC}$ can be written explicitly as $\Ad_{k_{i}(x)}(\Span_{\BC}\{e\}).$ This says moreover that $L$ can be written locally via a compact-valued gauge. Note that any two $k_{i}$ and $k_{j}$ differ on the intersection of their domain by an $M_{\theta}$-valued map because $M_{\theta}$ is the structure group of $q^{*}(G \to G/M_{\theta})$. We call these $k_{i}$ ``compact gauges."

With these local gauges in hand, we may define our variation. First, set $H = [e,e^{\dagger}]$ as before. We wish to use $H$ to construct our variation. To do this we use our compact gauges. Set 
\begin{align}
e_{i}(x) = \Ad_{k_{i}(x)}e. 
\end{align}
We then set 
\begin{align}
H_{i}(x) = [e_{i}(x),e_{i}(x)^{\dagger}]. 
\end{align}
Note we can exchange $\Ad$ with $\dagger$ because $k_{i}$ is $G$-valued and $\Ad$ is linear. Now let us glue $H_{i}$ to a global section. We have 
\begin{align}
e_{j}(x) = \Ad_{k_{j}(x)}e = \Ad_{k_{i}(x)m_{ij}(x)}e. 
\end{align}
Then we see that $\Ad_{m_{ij}(x)}e = ce$ with $|c| = 1$ because $m_{ij}$ is part of the stabilizer of $[\Span_{\BC}\{e\}]$ and $\Ad$ is an isometry on the compact group. This says that
\begin{align}
H_{j}(x) = [ce_{i}(x),\bar{c}e_{i}(x)^{\dagger}] = H_{i}(x).
\end{align}
Therefore, $H$ defines a global section. Note that the same argument shows that for $m \in M_{\theta}$, we have $\Ad(m)H = H$. We see $H^{\dagger} = H$, so $\sqrt{-1}H \in \mathfrak{g}.$ We therefore take the variation 
\begin{align}
\sigma : S^{2} \to \mathfrak{g} : x \mapsto \sqrt{-1}H(x). 
\end{align}
By construction, $H(x) = \Ad_{k}H$ for some $k \in G$, so $|H(x)|$ is constant. We computed before that $|H| ^{2}= 2$, so $|\sigma|^{2} = 2$ as well. 

\section{Writing $d + \ad(\frac{1}{2}\alpha)$ in the Compact Gauge}\label{b-2vanishessection}
Here we will work entirely in the product bundle $S^{2} \times \mathfrak{g}^{\BC} \to S^{2}$. Recall our connection on this bundle is $d + \ad(C).$ Choose a compact gauge of the form above called $k : U \seq S^{2} \to G.$ In this gauge, $C$ transforms to $b = \Ad(k^{-1})C + k^{-1}dk.$ The bundle-valued component of $\Phi$ transforms as $\Ad(k^{-1})A = A_{k}$. One verifies that in this gauge:
\begin{align}
F_{b,z\bar{z}} &= [A_{k},A^{\dagger}_{k}] \label{compactcurv}\\
b_{z} &= -b_{\bar{z}}^{\dagger}
\end{align}

Let $s$ be a section of the product bundle defined by $s = \Ad(k)e.$ Then once we change into the compact gauge, we get $s_{k} = e.$ We differentiate $s_{k}$ in our compact gauge:
\begin{align}
D^{0,1}s_{k} = \bar{\p}s_{k} + [b_{\bar{z}},s_{k}]d\bar{z} = \bar{\p}e + [b_{\bar{z}},e]d\bar{z} = [b_{\bar{z}},e]d\bar{z}
\end{align}
On the other hand, $s$ is a section of the holomorphic sub-bundle $L$ by construction, so $D^{0,1}s$ is a $(0,1)$-form with values in $L.$ Therefore, $D^{0,1}s_{k} = \gamma e$ for some $\gamma \in \Omega^{0,1}(U;\BC).$ Thus, $[b_{\bar{z}},e] = \gamma_{\bar{z}} e.$ This implies $b_{\bar{z}} \in \mathfrak{g}_{0} \oplus \mathfrak{g}_{1} \oplus \mathfrak{g}_{2}.$ By complex conjugation we can then write $b_{z} = b_{z,0} + b_{-1} + b_{-2}.$ Our goal is to show $b_{-2} = 0.$ 

Before this, also note that $[\Phi,L] = 0$ tells us that $[C_{z},\Ad(k)e] = 0$, so $\Ad(k)[A_{k},e] = 0$, i.e., $[A_{k},e] = 0.$ This means that $A_{k} \in \mathfrak{c} \oplus \mathfrak{g}_{1} \oplus \mathfrak{g}_{2}$ because $\mathfrak{g}_{0} = \Span_{\BC}\{H\} \oplus \mathfrak{c}$ where $\mathfrak{c}$ is the centralizer of $V$. Therefore, we can write $A_{k} = A_{0} + A_{1} + A_{2}$ where $A_{0} \in \mathfrak{c}$.

\begin{proposition}\label{b-2vanishesincptgauge}
In any compact gauge, $b_{-2} = 0.$
\end{proposition}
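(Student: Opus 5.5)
The plan is to show that $b_{-2}$ is a local representative of a global holomorphic section of a line bundle of negative degree, which must therefore vanish. Write $b_{-2} = \beta f$ with $\beta$ a local complex function; recall $f = e^{\dagger}$ spans $\mathfrak{g}_{-2}$. The first step is to check that $b_{-2}\,dz$ transforms tensorially between compact gauges. If $k_j = k_i m_{ij}$ with $m_{ij}$ valued in $M_\theta$, then $b$ changes by $\Ad(m_{ij}^{-1})$ plus $m_{ij}^{-1}dm_{ij}$. The latter is valued in $\mathfrak{m}_\theta \seq \mathfrak{q}_\theta \cap \mathfrak{g}$. Since $\tau$ exchanges $\mathfrak{g}_k$ and $\mathfrak{g}_{-k}$, we have $\mathfrak{q}_\theta \cap \mathfrak{g} \seq \mathfrak{g}_0$, so this term has no $\mathfrak{g}_{-2}$ component. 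Also $\Ad(m)$ preserves each $\mathfrak{g}_k$, since $\Ad(m)H = H$ as shown in the construction of the variation. Hence $\Ad(k)b_{-2}\,dz$ glues to a global smooth section $\Psi$ of $L'\otimes K$, where $L'_x = \Ad_{k(x)}\mathfrak{g}_{-2} = \Ad_{k(x)}\Span_{\BC}\{f\}$ is a smooth line sub-bundle of $S^2\times\mathfrak{g}^{\BC}$.

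The second step is to derive a $\bar\partial$-equation for $\beta$ from the curvature identity (\ref{compactcurv}). We take the $\mathfrak{g}_{-2}$-component of
\begin{align}
\p_z b_{\bar z} - \p_{\bar z} b_z + [b_z,b_{\bar z}] = [A_k,A_k^{\dagger}],
\end{align}
using the bracket rule (\ref{k+ellbracketid}) and the degree constraints $b_{\bar z}\in\mathfrak{g}_0\oplus\mathfrak{g}_1\oplus\mathfrak{g}_2$, $b_z \in \mathfrak{g}_0\oplus\mathfrak{g}_{-1}\oplus\mathfrak{g}_{-2}$, and $A_k = A_0+A_1+A_2$ with $A_0\in\mathfrak{c}$. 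The term $\p_z b_{\bar z}$ has no $\mathfrak{g}_{-2}$ part. In $[b_z,b_{\bar z}]$, the only contribution to $\mathfrak{g}_{-2}$ is $[b_{-2},b_{\bar z,0}]$. On the right-hand side, $A_k^{\dagger}\in\mathfrak{c}\oplus\mathfrak{g}_{-1}\oplus\mathfrak{g}_{-2}$, so the only candidate is $[A_0,A_2^{\dagger}]$. Since $A_2^\dagger\in\Span_{\BC}\{f\}$ and $\mathfrak{c}$ centralizes $V$, this term vanishes. Writing $b_{\bar z,0} = \lambda H + c_0$ with $c_0\in\mathfrak{c}$ (Lemma \ref{stablemma}), we get $[f,b_{\bar z,0}] = 2\lambda f$. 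This yields the linear equation
\begin{align}
\p_{\bar z}\beta = 2\lambda\beta.
\end{align}
So $\Psi$ is holomorphic with respect to the $\bar\partial$-operator $\bar\partial + \ad(b_{\bar z,0})$ restricted to $L'$. This operator is the one induced on $L'$ from $D^{0,1}$ after projecting along $\mathfrak{g}_{-1}\oplus\mathfrak{g}_0\oplus\mathfrak{g}_1\oplus\mathfrak{g}_2$, in the conjugated frames. Equivalently, $\beta$ is locally $e^{g}\times$(holomorphic), with $\bar\partial g = 2\lambda$.

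The third step, which I expect to be the main obstacle, is to identify the degree of $L'$ with this holomorphic structure as $-c_1(L)$. In the same compact gauge, $L$ is spanned by $e$, and $D^{0,1}e = [b_{\bar z},e] = -2\lambda e\,d\bar z$, since only the $\lambda H$ part of $b_{\bar z}$ acts nontrivially on $e$ (the $\mathfrak{g}_1,\mathfrak{g}_2$ parts bracket $e$ into $\mathfrak{g}_3,\mathfrak{g}_4=0$). Thus in these frames the holomorphic structure on $L$ is $\bar\partial - 2\lambda$ and that on $L'$ is $\bar\partial + 2\lambda$. Hence the pairing $\kappa(e,f)$ between frames is constant and the bilinear pairing $L\otimes L'\to\mathcal{O}$ induced by $\kappa$ is holomorphic and nondegenerate, so $L'\cong L^{-1}$ holomorphically. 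I would verify carefully that the transition functions are consistent: $\Ad(m)e = ce$ forces $\Ad(m)f = \bar c f = c^{-1}f$, since $|c|=1$, so the frames of $L$ and $L'$ transform dually. Then $\Psi$ is a holomorphic section of $L^{-1}\otimes K$, which has degree $-c_1(L)-2\le -4$ by (\ref{c_1(L)>=2}). Hence $\Psi\equiv 0$, i.e.\ $b_{-2}=0$ in every compact gauge.
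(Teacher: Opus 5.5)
Your proposal is correct and follows essentially the paper's argument. You globalize $b_{-2}\,dz$ as a section of the conjugated $\mathfrak{g}_{-2}$-bundle tensored with $K$, extract $\p_{\bar z}b_{-2}+[b_{\bar z,0},b_{-2}]=0$ from the $\mathfrak{g}_{-2}$-component of (\ref{compactcurv}), and conclude by a degree bound; your projection along $E_{-1}\oplus\cdots\oplus E_{2}$ is exactly the paper's quotient by $L^{\perp_\kappa}$, and your frame-wise $\kappa$-pairing with $L$ plays the role of its identification $Q\cong L^{*}$. One harmless sign slip: $[b_{\bar z,0},e]=2\lambda e$, so in your frames the operators are $\bar\partial+2\lambda$ on $L$ and $\bar\partial-2\lambda$ on $L'$ (as your own equation $\p_{\bar z}\beta=2\lambda\beta$ shows), and these are still dual, so the conclusion stands.
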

\begin{proof}
Let $E_{j}$ be the smooth sub-bundle of $S^{2} \times \mathfrak{g}^{\BC} \to S^{2}$ defined by conjugating $\mathfrak{g}_{j}$ by $k.$ Thus, in our compact gauge $E_{j}$ is just $\mathfrak{g}_{j}.$

To prove the proposition, we first show that $\beta = b_{-2}dz$ defines a global section of $E_{-2} \otimes K.$ We then identify $E_{-2}$ smoothly with $L^{*}$ and another bundle $Q.$ Finally, we show $\beta$ is holomorphic as a section of $Q \otimes K$ and hence zero for degree reasons. Note that $E_{-2}$ is not necessarily a holomorphic sub-bundle of $S^{2} \times \mathfrak{g}^{\BC}$, and $E_{-2}$ is also not necessarily the root line bundle corresponding to $-\theta.$ It is just the conjugation of our fixed root space $\mathfrak{g}_{-2}$ in this local trivialization. Therefore, we do need to identify it with a holomorphic bundle.

To show $\beta$ defines a global form, we check how it changes under a change of trivialization. In particular, if $k' = km$ then the connection coefficient transforms as
\begin{align}
b_{z}' = \Ad(m^{-1})b_{z} + m^{-1}\p_{z}m. 
\end{align}
Now, for any tangent vector field $X_{x}$ at $x \in U \cap U' \seq S^{2}$ we have:
\begin{align}
m^{-1}dm(X) \in \mathfrak{m_{\theta}} \seq \mathfrak{g}_{0} 
\end{align}
Therefore, the $-2$ component of $b$ transforms correctly (we need $\Ad(k)b_{-2} = \Ad(k')b_{-2}'$, i.e., $b_{-2} = \Ad(m)b_{-2}'$ on $U \cap U'$), so we get a global section $\beta \in \Gamma(E_{-2} \otimes K).$

Next, set $L^{\perp_{\kappa}} = \{(x,X) \in S^{2} \times \mathfrak{g}^{\BC} : \kappa(X,L_{x}) = 0\}$, $Q = (S^{2} \times \mathfrak{g}^{\BC})/L^{\perp_{\kappa}}$, and $\pi : S^{2} \times \mathfrak{g}^{\BC} \to Q.$ By looking at the expression for $D\kappa$, one checks that $\kappa$ is parallel. From this, we take the $(0,1)$-part of the identity to conclude that $\bar{\p}(\kappa(X,Y)) = \kappa(D^{0,1}X, Y) + \kappa(X,D^{0,1}Y).$ Let $X \in \Gamma(L^{\perp_{\kappa}})$ and $Y \in \Gamma(L).$ These are $\kappa$-orthogonal, so one obtains
\begin{align}
\kappa(D^{0,1}X,Y) + \kappa(X,D^{0,1}Y) = 0.
\end{align}
Then recall that $D^{0,1}Y \in \Gamma(L) \otimes \Omega^{0,1}$ because $L$ is a holomorphic sub-bundle. This implies $\kappa(D^{0,1}X,Y) = 0$. In other words, $L^{\perp_{\kappa}}$ is a holomorphic sub-bundle of $S^{2} \times \mathfrak{g}^{\BC}.$ This induces a holomorphic structure on $Q$ as a quotient. The associated Dolbeault operator is $D^{0,1}_{Q}(\pi(X)) = \pi(D^{0,1}X).$ 

Next, define the bundle map $\psi : S^{2} \times \mathfrak{g}^{\BC} \to L^{*}$ by $\psi(X)(\ell) = \kappa(X,\ell).$ One has $\ker \psi = L^{\perp_{\kappa}}$, so $L^{*}$ is smoothly isomorphic to $Q$. One also has a smooth isomorphism of $E_{-2}$ and $Q$ given by the projection map $\pi$. This tells us in particular that $\beta = 0$ if and only if $\pi(\beta) = 0.$

Now, set $\beta_{Q} = \pi(\beta).$ We want to show $D^{0,1}_{Q \otimes K}\beta_{Q} = 0.$ To do this, first recall that
\begin{align}
F_{b,z\bar{z}} = \p_{z}b_{\bar{z}} - \p_{\bar{z}}b_{z} + [b_{z},b_{\bar{z}}].
\end{align} 
On the other hand, we know this must be $[A_{k},A_{k}^{\dagger}]$, which can only have $-2$-component $[A_{0},A_{2}^{\dagger}] = 0$. This must be zero because $A_{0} \in \mathfrak{c}$. Note that differentiating preserves the grades in this frame. Thus, $\p_{\bar{z}}b_{-2} + [b_{\bar{z},0}, b_{-2}] = 0.$ It turns out that this will be enough to show that $\beta_{Q}$ is holomorphic. To this end, in our frame locally,
\begin{align}
D^{0,1}_{Q \otimes K}(\beta_{Q}) &= \pi(D^{0,1}b_{-2})dz
\end{align}
We then compute $\pi(D^{0,1}b_{-2})$:
\begin{align}
\pi(D^{0,1}b_{-2}) &= \pi(\bar{\p}b_{-2} + [b_{\bar{z}}d\bar{z}, b_{-2}]) \\ \non \\
&= \pi(\p_{\bar{z}}b_{-2} + [b_{\bar{z},0},b_{-2}] + [b_{\bar{z},1},b_{-2}] + [b_{\bar{z},2}, b_{-2}])d\bar{z} \\ \non \\
&= \pi(\p_{\bar{z}}b_{-2} + [b_{\bar{z},0},b_{-2}]) d\bar{z} = 0.
\end{align}
This implies that $\beta_{Q}$ is holomorphic. Now notice that 
\begin{align}
c_{1}(Q) = c_{1}(L^{*}) = -c_{1}(L) \leq -2
\end{align}
Then we see $c_{1}(Q \otimes K) \leq -4 < 0$, so $\beta_{Q} \in H^{0}(S^{2},Q \otimes K) = 0$. Therefore, $\beta_{Q} = 0$, so $\beta = 0$ as well. This tells us that $b_{-2}$ must always vanish in a compact gauge.
\end{proof}

\section{A Topological Formula for the $L^{2}$ Norms of Components of $A$ and $b$}
As in \cite{Kra26}, the key to our choice of variation is it enables a clean evaluation of the stability inequality. In the present case, it becomes important to estimate the $L^{2}$ norm of certain components of $A$ and $b.$ The main tool we have is a clean formula for the curvature of our connection.

Thus, to begin, recall from Chern-Weil Theory that
\begin{align}
c_{1}(L) = \dfrac{1}{\pi}\int_{S^{2}}F_{L,z\bar{z}}dxdy
\end{align}
Therefore, by (\ref{compactcurv}) we might hope that $F_{L,z\bar{z}}$ is expressible in terms of the quantities $A$ and $b.$

To see that this is the case, we need to choose a connection on $L.$ We choose the Chern connection on $L$, which is the orthogonal projection of the connection on $S^{2} \times \mathfrak{g}^{\BC}$ onto $L.$ In particular, in our compact gauge $k$, this is projection onto $\mathfrak{g}_{2}.$ Therefore, $D_{L}e = (de)_{2} + [b,e]_{2} = [b_{0},e].$ This means the connection on $L$ is just $d + \ad(b_{0})$ in our compact gauge. However, notice that $\ad(b_{0})$ acts on $e$ by $\lambda : \mathfrak{g}_{0} \to \BC$ which is defined by $\ad(X)e = \lambda(X)e$ because $\mathfrak{g}_{0} = \mathfrak{c} \oplus \operatorname{span}_{\BC}\{H\}$. This tells us that the connection form in this gauge is written $a = \lambda(b_{0}).$ The $[a,a]$ term of the curvature vanishes because $a$ is $\BC$-valued. We conclude: 
\begin{align}
F_{L,z\bar{z}} = \p_{z}a_{\bar{z}} - \p_{\bar{z}}a_{z} = \lambda(\p_{z}b_{\bar{z},0} - \p_{\bar{z}}b_{z,0})
\end{align}
Note that the second equality follows from linearity of $\lambda$.

We now attempt to recover this formula within the curvature of $C$. To this end, recall that we have two different formulas for computing $F_{b,z\bar{z}}.$ The first is the definition:
\begin{align}
F_{b,z\bar{z}} = \p_{z}b_{\bar{z}} - \p_{\bar{z}}b_{z} + [b_{z},b_{\bar{z}}].
\end{align}
Looking at this expression carefully, we see that the first two terms are precisely the curvature of $L$ once we take the $0$ component and apply $\lambda.$ Therefore, we must try to understand $\lambda([b_{z},b_{\bar{z}}]_{0}).$ We will compute this directly. First we should get a nicer expression for $\lambda:$
\begin{align}
\lambda(X) = \lambda(X_{\mathfrak{c}} + cH) = c\lambda(H) = 2c \implies \lambda(X) = \kappa(X,H).
\end{align}
We would like to apply this formula to $[b_{z},b_{\bar{z}}]_{0}$. Recall again that $b_{z} = -b_{\bar{z}}^{\dagger}$ because it is the complex linear extension of a real form. By Proposition \ref{b-2vanishesincptgauge} of the last section, we see that $b_{z} = b_{z,0} + b_{-1}.$ This tells us that $b_{\bar{z}} = -b_{z,0}^{\dagger} - b_{-1}^{\dagger}.$ Using this, we conclude that
\begin{align}
[b_{z},b_{\bar{z}}]_{0} = [b_{z,0},-b_{z,0}^{\dagger}] + [b_{-1},-b_{-1}^{\dagger}]
\end{align}
Now notice that if $X \in \mathfrak{g}_{k}$, we can compute
\begin{align}
\lambda([X,X^{\dagger}]) &= \kappa([X,X^{\dagger}],H) \\ \non \\
&= -\kappa(X^{\dagger},[X,H]) \\ \non \\
&= k\kappa(X,X^{\dagger}) \\ \non \\
&= k|X|^{2}.
\end{align}
From here, we see that $\lambda([b_{z},b_{\bar{z}}]_{0}) = |b_{-1}|^{2}.$ Next, we reach for the formula (\ref{compactcurv}). Recall that 
\begin{align}
A_{k} = A_{0} + A_{1} + A_{2} \in \mathfrak{c} \oplus \mathfrak{g}_{1} \oplus \mathfrak{g}_{2}.
\end{align}
 This means that
\begin{align} 
[A_{k},A_{k}^{\dagger}]_{0} = [A_{0},A_{0}^{\dagger}] + [A_{1},A_{1}^{\dagger}] + [A_{2},A_{2}^{\dagger}].
\end{align}
Applying $\lambda$, we obtain $\lambda([A_{k},A_{k}^{\dagger}]_{0}) = |A_{1}|^{2} + 2|A_{2}|^{2}.$

Looking at both of our formulas we see that $F_{L,z\bar{z}} = |A_{1}|^{2} + 2|A_{2}|^{2} - |b_{-1}|^{2}.$ Plugging into the Chern-Weil identity, we obtain:
\begin{align}
\pi c_{1}(L) = \int_{S^{2}}|A_{1}|^{2} + 2|A_{2}|^{2} - |b_{-1}|^{2}dxdy \label{curvature identity}
\end{align}
This will be incredibly useful in the following section. Note that it does take some care to establish that $A_{j}dz$ and $b_{-1}dz$ patch to global forms, but the argument is similar to the case of $b_{-2}dz.$

\section{$\sigma$ Fails Cone Stability}
Now we have all the tools we need to show that $\sigma$ fails the cone stability inequality. First recall that the cone stability inequality says that
\begin{align}
\dfrac{1}{4}\int_{S^{2}}|X|^{2}\operatorname{vol}_{S^{2}} + \int_{S^{2}}|DX|^{2} - g(R(X,\alpha_{i})\alpha_{i},X)\operatorname{vol}_{S^{2}} \geq 0
\end{align}
for every $X \in \Gamma(S^{2}, S^{2} \times \mathfrak{g})$, where $D = d + \ad C$. Here $\alpha$ is again the pullback of the left Maurer-Cartan form by $\phi.$
\begin{proposition}\label{varfailsconestab}
The variation $\sigma$ fails the cone stability inequality.
\end{proposition}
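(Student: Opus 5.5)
The plan is to evaluate the left-hand side of the cone stability inequality for $X=\sigma$ exactly and express it through the curvature identity (\ref{curvature identity}). First, the curvature term. For a bi-invariant metric on $G$ one has $R(X,Y)Z=-\tfrac14[[X,Y],Z]$. With our metric $-\kappa$ this gives $g(R(X,\alpha_i)\alpha_i,X)=\tfrac14|[\alpha_i,X]|^2$, where the norm comes from $h$ (it agrees with $-\kappa$ on $\mathfrak g$).

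All three integrands will be computed in local coordinates $(x,y)$ of a conformal holomorphic chart. The Dirichlet-type terms $\int|DX|^2$ and $\int|[\alpha_i,X]|^2$ are conformally invariant on surfaces, so they can be integrated against $dx\,dy$. Only the zeroth-order term sees the round metric, and it equals $\tfrac14\cdot 2\cdot 4\pi=2\pi$ because $|\sigma|^2\equiv 2$. For a real section $X$ we have $|D_xX|^2+|D_yX|^2=4|D_zX|^2$ and $|[\alpha_x,X]|^2+|[\alpha_y,X]|^2=4|[\alpha_z,X]|^2$, using $\alpha_{\bar z}=-\alpha_z^\dagger$ and $X^\dagger=-X$.

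Next I pass to a compact gauge $k$, which is legitimate because $\Ad(k)$ is an $h$-isometry and the quantities involved are gauge covariant. In this gauge $\sigma_k=\sqrt{-1}H$ is constant, so $D_z\sigma_k=[b_z,\sqrt{-1}H]$. By Proposition \ref{b-2vanishesincptgauge}, $b_z=b_{z,0}+b_{-1}$. Since $[\mathfrak g_0,H]=0$ and $[b_{-1},H]=b_{-1}$, this gives $|D_z\sigma|^2=|b_{-1}|^2$. Similarly $\alpha_z=2A$ becomes $2A_k$ with $A_k=A_0+A_1+A_2$. Since $[A_0,H]=0$ ($A_0\in\mathfrak c$), we get $[2A_k,\sqrt{-1}H]=-2\sqrt{-1}(A_1+2A_2)$. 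By $h$-orthogonality of the grades, $|[\alpha_z,\sigma]|^2=4(|A_1|^2+4|A_2|^2)$. These pointwise quantities are gauge independent, so they patch globally, as already noted after (\ref{curvature identity}).

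Assembling, the left-hand side of (\ref{conestabineq}) becomes
\begin{align}
2\pi+4\int_{S^2}|b_{-1}|^2\,dxdy-4\int_{S^2}\left(|A_1|^2+4|A_2|^2\right)dxdy .
\end{align}
By (\ref{curvature identity}), $\int|b_{-1}|^2=\int(|A_1|^2+2|A_2|^2)-\pi c_1(L)$. Substituting, the expression equals
\begin{align}
2\pi-4\pi c_1(L)-8\int_{S^2}|A_2|^2\,dxdy\le 2\pi-8\pi<0,
\end{align}
using $c_1(L)\ge 2$ from (\ref{c_1(L)>=2}). This contradicts (\ref{conestabineq}), which proves the proposition.

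The main obstacle is bookkeeping the normalizations consistently: the factor relating $|DX|^2$ to $|D_zX|^2$, the factor $\tfrac14$ in the curvature together with the $\tfrac12$ in $C=\tfrac12\alpha$, and the measure $dxdy$ in the Chern--Weil formula. The whole argument relies on these combining so that the $|A_1|^2$ terms cancel exactly and the remaining signs are favorable. I would first verify the constants on the model case $G=\SU(2)$ with $\phi$ of degree one. I expect the margin ($-8\pi$ against $2\pi$) to survive moderate slips, but the exact cancellation of $|A_1|^2$ should be checked carefully.
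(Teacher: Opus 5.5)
Your proposal is correct and follows essentially the same route as the paper. You compute the zeroth-order term as $2\pi$, get $|D_z\sigma|^2=|b_{-1}|^2$ and $|[A,\sigma]|^2=|A_1|^2+4|A_2|^2$ in a compact gauge via Proposition \ref{b-2vanishesincptgauge}, and substitute (\ref{curvature identity}) to obtain $2\pi-4\pi c_1(L)-8\int|A_2|^2\,dxdy<0$. The only difference is that you derive the complex-coordinate index form from $R(X,Y)Z=-\tfrac14[[X,Y],Z]$ and conformal invariance, where the paper cites Micallef--Moore and Burstall--Rawnsley; your constants agree with the paper's $4\int(|D_z\sigma|^2-|[A,\sigma]|^2)\,dxdy$.
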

\begin{proof}
Assuming our map $\phi$ is cone stable, let us evaluate the cone stability inequality on $\sigma$ piece by piece. The first term is just:
\begin{align}
\dfrac{1}{4}\int_{S^{2}}|\sigma|^{2}\operatorname{vol}_{S^{2}} = \dfrac{1}{4}(4\pi)(2) = 2\pi.
\end{align}
For the second integral, we recognize this is the usual index form of $\phi$, $I(\sigma,\sigma)$. We write this down in terms of the complex components of our forms (see for instance Micallef and Moore \cite{MM88}, Burstall and Rawnsley \cite{BR90}):
\begin{align}
I(\sigma, \sigma) = 4\int_{S^{2}}|D_{z}\sigma|^{2} - |[A, \sigma]|^{2}dxdy
\end{align}
Notice that if we change into the compact gauge $k$, $D\sigma = \Ad(k)D_{k}\sigma_{k}$. This means the norm will not change in the compact gauge.
\begin{align}
D_{k,z}\sigma_{k} = \p_{z}\sigma_{k} + [b_{z},\sigma_{k}] = [b_{-1},\sigma_{k}] = \sqrt{-1}b_{-1}.
\end{align}
Thus,
\begin{align}
|D_{z}\sigma|^{2} = |b_{-1}|^{2}.
\end{align}
Lastly we compute the curvature term:
\begin{align}
|[A,\sigma]|^{2} &= |\Ad(k)[A_{k},\sigma_{k}]|^{2} \\ \non \\
&= |-\sqrt{-1}A_{1} -2\sqrt{-1}A_{2}|^{2} \\ \non \\
&= |A_{1}|^{2} + 4|A_{2}|^{2}.
\end{align}
If our map $\phi$ is cone stable, applying (\ref{curvature identity}) from the last section we obtain:
\begin{align}
\dfrac{1}{4}\int_{S^{2}}|\sigma|^{2}\operatorname{vol}_{S^{2}} + I(\sigma,\sigma) &= 2\pi + 4\int_{S^{2}}|b_{-1}|^{2} - |A_{1}|^{2} - 4|A_{2}|^{2}dxdy \\ \non \\
&= 2\pi - 4\pi c_{1}(L) - 8\int_{S^{2}}|A_{2}|^{2}dxdy \geq 0
\end{align}
We conclude that
\begin{align}
2\pi(1 - 2c_{1}(L)) - 8\int_{S^{2}}|A_{2}|^{2}dxdy \geq 0. 
\end{align}
Recalling (\ref{c_1(L)>=2}), we reach a contradiction.
\end{proof}
\section{Proof of the Main Theorem}
We briefly recall the main theorem. Let $u : M \to G$ be a stable stationary harmonic map into a compact Lie group with bi-invariant metric. We claim that $u$ is smooth away from a set of Hausdorff codimension at least four.
\begin{proof}
By Hsu's dimension reduction theorem, we just need to show there are no nonconstant cone stable $S^{2} \to G.$ Suppose there is a nonconstant cone stable $\phi : S^{2} \to G.$ First, translate $\phi$ to have image in the identity component of $G.$ This translated map satisfies the same properties as the original map. Therefore, we may assume that $G$ is connected. The result then follows immediately from Proposition \ref{varfailsconestab} and the lifting argument in section nine of \cite{Kra26}.
\end{proof}

\bigskip

\bibliography{generalcodim4}{}
\bibliographystyle{plain}

\end{document}